\documentclass[11pt]{amsart}
\usepackage{amssymb}

\title[elliptic space curve]{Galois lines for normal elliptic space curves, II}
\author[Hisao Yoshihara]{}

\newtheorem{theorem}{Theorem}
\newtheorem{lemma}{Lemma}
\newtheorem{claim}{Claim}

\newtheorem{corollary}[lemma]{Corollary}

\theoremstyle{definition}
\newtheorem{definition}{Definition}

\theoremstyle{remark}
\newtheorem{remark}[lemma]{Remark}

\setlength{\textheight}{21.6cm} 
\setlength{\textwidth}{14cm} 
\setlength{\oddsidemargin}{1cm} 
\setlength{\evensidemargin}{1cm} 


\newenvironment{namelist}[1]{%
\begin{list}{}
  {
   \settowidth{\labelwidth}{#1}
   \setlength{\leftmargin}{2.5\labelwidth}}
}{%
\end{list}}


\begin{document}
\maketitle

\begin{center}

{\sc Hisao Yoshihara}\\
\medskip
{\small{\em Department of Mathematics, Faculty of Science, Niigata University,\\
Niigata 950-2181, Japan}\\
E-mail:{\tt yosihara@math.sc.niigata-u.ac.jp}}
\end{center}

\begin{abstract}
For each linearly normal elliptic curve $C$ in $\mathbb P^3$,  
we determine Galois lines and their arrangement. 
The results are as follows: the curve $C$ has just six $V_4$-lines and in case $j(C)=1$, 
it has eight $Z_4$-lines in addition. 
The $V_4$-lines form the edges of a tetrahedron, in case $j(C)=1$, 
for each vertex of the tetrahedron, there exist just two $Z_4$-lines passing through it. 
We obtain as a corollary that each plane quartic curve of genus one does not have more than one Galois point. 
\medskip

\noindent key words and phrases : Galois line, space elliptic curve, Galois covering  

\noindent 2000 Mathematics Subject Classification number : 14H50, 14H52
\end{abstract}
\bigskip

\section{Introduction} 
This is a continuation of \cite{cy}, where we found three $V_4$-lines for each linearly normal elliptic curve $C$ in $\mathbb P^3$, 
and four $Z_4$-lines for such curve $C$ with $j(C)=1$. 
However, those lines are not all the ones. 
In this article we determine all Galois lines and describe their arrangement. 
First let us recall the definition of Galois lines briefly.  

Let $k$ be the ground field of our discussion, we assume it to be algebraically closed, later we assume it 
the field  $\mathbb C$ of complex numbers.  
Let $C$ be a smooth irreducible non-degenerate curve of degree $d$ in the projective three space ${\mathbb P}^3$ and $\ell$ 
a line in ${\mathbb P}^3$ not meeting $C$. Let $\pi_{\ell} : {\mathbb P}^3 \dashrightarrow l_0$ be the projection with center $\ell$, 
where ${\ell}_0$ is a line not meeting $\ell$. Restricting  
$\pi_{\ell}$ to $C$, we get a surjective morphism $\pi_{\ell}|_C : C \longrightarrow l_0$ and hence an extension of fields
 $(\pi_{\ell}|_C)^* : k({\ell}_0) \hookrightarrow k(C)$, where $[k(C):k(\ell_0)]=d$. 
 Note that the extension of fields does not depend on $\ell_0$, but on $\ell$.

\begin{definition}\label{1}
The line $\ell$ is said to be a Galois line for $C$ if the extension $k(C)/k(\ell_0)$ is Galois, or equivalently, if $\pi_{\ell}|_C$ 
is a Galois covering.  
In this case Gal($k(C)/k({\ell}_0)$) is said to be the Galois group for $\ell$ and denoted by $G_{\ell}$.

\end{definition}

If $\ell$ is the Galois line, then each element $\sigma \in G_l$ induces an automorphism of $C$ over ${\ell}_0$. We denote it by the same letter $\sigma$. 
Hereafter, assume $C$ is linearly normal, i.e., the hyperplanes cut out the complete linear series $|\mathcal{O}_C(1)|$. 
Then, the automorphism $\sigma$ can be extended to a projective transformation of $\mathbb P^3$, 
which will be also denoted by the same letter $\sigma$. 

\bigskip 

We use the following notation and convention:

\begin{namelist}{$\cdot$}
\item[{$\cdot$}]$V_4$ : the Klein 4-group
\item[{$\cdot$}]$Z_4$ : the cyclic group of order four 
\item[{$\cdot$}]$\sim$ : the linear equivalence of divisors
\item[{$\cdot$}]Aut$(C)$ : the automorphism group of $C$ 
\item[{$\cdot$}]${\mathcal L}(D):= \{\ f \in k(C) \setminus \{ 0 \} \ | \ \mathrm{div}(f)+D \ge 0 \}\cup \{0 \}$, where div$(f)$ is the divisor of $f$ and
 $D$ is a divisor on $C$.
\item[{$\cdot$}]$\langle \cdots \rangle$ : the group generated by the set $\{ \cdots \}$ or the linear subvariety spanned by the set $\{ \cdots \}$ 
\item[{$\cdot$}]$V(F)$ : the variety defined by $F=0$
\item[{$\cdot$}]$C \cdot H$ : the intersection divisor of $C$ and $H$ on $C$, where $H$ is a plane.   
\item[{$\cdot$}]$\ell_{PQ}$ : the line passing through $P$ and $Q$ 
\end{namelist}

\section{Statement of Results}
We assume $k=\mathbb C$ and use the same notation as in \cite{cy}. 

\begin{definition}\label{3}
When $\ell$ is a Galois line for $C$ and $G_{\ell} \cong V_4$ (resp. $Z_4$), we call $\ell$ a $V_4$ (resp. $Z_4$)-line.
\end{definition}

There exist $V_4$-lines for the curve which is given by an intersection of hypersurfaces as follows. 

\begin{lemma}\label{5}
Suppose $S_1$ and $S_2$ are irreducible quadratic surfaces in $\mathbb P^3$ satisfying the following conditions{\rm :}  
\begin{enumerate}
\item[(1)] $S_i$ $(i=1, 2)$ has a singular point $Q_i$ and $Q_1 \ne Q_2$.
\item[(2)] $S_1 \cap S_2$ is a smooth curve $\Delta$. 
\item[(3)] The line $\ell$ passing through $Q_1$ and $Q_2$ does not meet $\Delta$.
\end{enumerate}
Then, $\Delta$ is a linearly normal elliptic curve and $\ell$ is a $V_4$-line for $\Delta$. 
\end{lemma}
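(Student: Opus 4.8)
The plan is to analyze the two cones $S_1$ and $S_2$ with vertices $Q_1, Q_2$ and show directly that the projection from $\ell = \ell_{Q_1Q_2}$ realizes $\Delta$ as a $V_4$-covering of $\mathbb{P}^1$. Since each $S_i$ is an irreducible quadric with a singular point $Q_i$, it is a quadric cone; by condition (2) the intersection $\Delta = S_1 \cap S_2$ is a smooth complete intersection curve of degree $4$ in $\mathbb{P}^3$. By the adjunction formula (or the standard genus computation for a complete intersection of two quadrics), $\Delta$ has arithmetic genus $1$, so $\Delta$ is a smooth elliptic curve; being a nondegenerate degree-$4$ curve in $\mathbb{P}^3$ it is linearly normal. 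This settles the first assertion, and it remains to prove $\ell$ is a $V_4$-line.

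First I would produce two order-two automorphisms of $\Delta$ from the cone structure. Because $Q_i$ is the vertex of the cone $S_i$, projection from $Q_i$ maps $S_i \setminus \{Q_i\}$ to a conic, and each line of the ruling through $Q_i$ meets $\Delta$ in two points (again using degree $4$ and that $Q_i \notin \Delta$ by condition (3)); the involution $\tau_i$ that swaps these two points is an automorphism of $\Delta$. Concretely, choosing coordinates so that $Q_1 = (0:0:0:1)$ and $Q_2 = (0:0:1:0)$, the quadric $S_1$ has an equation independent of the last coordinate and $S_2$ one independent of the third, and $\tau_1, \tau_2$ are the restrictions of the coordinate reflections $x_3 \mapsto -x_3$ (after diagonalizing), so each is induced by a linear involution fixing $\ell$ pointwise on the relevant coordinate line. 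The key computation is that $\tau_1$ and $\tau_2$ commute and are distinct, and that their product $\tau_1\tau_2$ is again an involution, so that $\langle \tau_1, \tau_2\rangle \cong V_4$.

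Next I would identify this group with the Galois group of the projection $\pi_\ell|_\Delta$. The projection $\pi_\ell$ from $\ell$ collapses exactly the planes through $\ell$; two points of $\Delta$ have the same image under $\pi_\ell$ precisely when they lie on a common plane through $\ell$, equivalently on a common line through both $Q_1$ and $Q_2$ in the pencil. Each fibre of $\pi_\ell|_\Delta$ is stabilized by both $\tau_1$ and $\tau_2$: the fibre over a general point is the set of four intersection points of $\Delta$ with a plane $H \supset \ell$, and the pair of rulings (one from each cone) lying in $H$ shows that $V_4$ acts simply transitively on this fibre. Since $\deg(\pi_\ell|_\Delta) = 4 = |V_4|$ and $V_4 \subseteq \mathrm{Aut}(\Delta/\ell_0)$, the covering is Galois with group $V_4$, so $\ell$ is a $V_4$-line by Definition \ref{3}.

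The main obstacle I anticipate is verifying cleanly that the two involutions $\tau_1,\tau_2$ are genuine automorphisms of $\Delta$ and that they fix the fibres of $\pi_\ell$; this requires care because $\tau_i$ is a priori only defined on the cone $S_i$, not on all of $\mathbb{P}^3$, and one must check it preserves the second quadric $S_{3-i}$ (equivalently, restricts to $\Delta$) rather than merely acting on $S_i$. The simultaneous diagonalization of the two quadrics in suitable coordinates — using that condition (3) forces $\ell \cap \Delta = \emptyset$ and hence a nondegenerate configuration — is what makes both $\tau_i$ simultaneously linear and commuting, and confirming that this diagonalization is available under hypotheses (1)–(3) is the technical crux of the argument.
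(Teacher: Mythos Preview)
Your approach is correct and is the automorphism-side dual of the paper's argument. The paper observes that $\pi_\ell|_\Delta$ factors in two ways, $\Delta \to \Delta_i \to \mathbb{P}^1$ ($i=1,2$), each arrow of degree $2$: first project from the vertex $Q_i$ onto the base conic $\Delta_i$ of the cone $S_i$, then project from the image point $R_i = \pi_{Q_i}(\ell\setminus\{Q_i\})$. This exhibits two distinct index-$2$ subfields $k(\Delta_1)\ne k(\Delta_2)$ inside $k(\Delta)$ over $k(\mathbb{P}^1)$, and a degree-$4$ extension with two distinct quadratic intermediate fields is automatically Galois with group $V_4$.

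Your involutions $\tau_i$ are precisely the generators corresponding, under the Galois correspondence, to these intermediate fields, so the two arguments agree in content. But your ``technical crux''---simultaneously diagonalizing the two quadrics, extending each $\tau_i$ to a linear map on $\mathbb{P}^3$, and checking that it preserves $S_{3-i}$---is an unnecessary detour. The map $\tau_i$ is already well-defined on $\Delta$ as the covering involution of the double cover $\Delta\to\Delta_i$, so it automatically lies in $\mathrm{Aut}(\Delta/\mathbb{P}^1)$; since that group has order at most $4$ and contains two distinct involutions $\tau_1\ne\tau_2$, it must be $V_4$ and the cover is Galois. The paper's field-theoretic phrasing reaches the conclusion with no coordinate computation at all, so the obstacle you anticipate simply does not arise.
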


Let $C$ be a linearly normal elliptic curve in $\mathbb P^3$. 
Then, there exists a divisor $D$ of degree four on an elliptic curve $E$ such that $C$ is given by an embedding of $E$ associated with the complete linear system $|D|$.  
Note that $C$ can be expressed as an intersection of two quadratic surfaces. 

\begin{lemma}\label{13}
There exist just four irreducible quadratic surfaces $S_i$ {\rm (} $0 \le i \le 3$ {\rm )} such that each $S_i$ has a singular point and contains $C$. 
Let $Q_i$ be the unique singular point of $S_i$. 
Then the four points are not coplanar. 
\end{lemma}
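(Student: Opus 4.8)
The plan is to exploit that $C$, being a linearly normal elliptic curve of degree four, is the complete intersection of two quadrics (as noted just above the statement), so that the quadric surfaces containing $C$ form a pencil. First I would pin down this pencil by a dimension count: since $\deg C=4$ and $g(C)=1$, the line bundle $\mathcal O_C(2)$ has degree $8$, whence $h^0(\mathcal O_C(2))=8$ by Riemann--Roch, and using projective normality the restriction $H^0(\mathcal O_{\mathbb P^3}(2))\to H^0(\mathcal O_C(2))$ is surjective; hence the quadrics through $C$ form a vector space of dimension $10-8=2$, i.e.\ a pencil $\{\,\lambda F_1+\mu F_2 \mid [\lambda:\mu]\in\mathbb P^1\,\}$. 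In particular every quadric containing $C$ lies in this pencil. Next I would rule out reducible members: a reducible quadric is a union of (possibly coincident) planes, and since $C$ is irreducible and non-degenerate it cannot be contained in such a union; hence every member of the pencil is irreducible, and so has rank at least $3$.

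Writing $A_1,A_2$ for the symmetric matrices of $F_1,F_2$, I would then consider the discriminant $\delta(\lambda,\mu)=\det(\lambda A_1+\mu A_2)$, a binary quartic form whose zeros are precisely the singular members of the pencil. Because each member is irreducible (rank $\ge 3$) while a singular quadric in $\mathbb P^3$ has rank $\le 3$, every singular member has rank exactly $3$, i.e.\ is a cone with a single vertex $Q_i$; this yields at once both the irreducibility and the uniqueness of the singular point. To get the count I would invoke the classical theory of pencils of quadrics: the complete intersection $C$ is smooth if and only if $\delta$ has no repeated root. Over $\mathbb C$ this gives exactly four distinct zeros, hence exactly four singular members $S_0,\dots,S_3$. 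I expect this equivalence --- smoothness of the base locus $\Longleftrightarrow$ simple roots of the discriminant --- to be the main technical input, and the step I would document or cite most carefully.

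For the non-coplanarity of the vertices I would use simultaneous diagonalization. Since $\delta$ has distinct roots some member is nonsingular; normalizing its matrix to the identity and diagonalizing the other generator, I may choose homogeneous coordinates so that $F_1=x_0^2+x_1^2+x_2^2+x_3^2$ and $F_2=\lambda_0 x_0^2+\lambda_1 x_1^2+\lambda_2 x_2^2+\lambda_3 x_3^2$ with the $\lambda_i$ pairwise distinct. The singular members then occur at $[\lambda:\mu]=[-\lambda_i:1]$, where the matrix $\mathrm{diag}(\lambda_j-\lambda_i)_j$ has kernel spanned by the $i$-th coordinate vector; thus $Q_i$ is the $i$-th vertex of the coordinate tetrahedron $[1:0:0:0],\dots,[0:0:0:1]$. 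These four points span $\mathbb P^3$ and are therefore not coplanar, which completes the argument. Once the diagonalization is available this last step is immediate, so the genuine content lies entirely in the discriminant analysis of the preceding paragraph.
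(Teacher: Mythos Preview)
Your argument is correct but proceeds quite differently from the paper. The paper does not invoke the general theory of pencils of quadrics or simultaneous diagonalization at all; instead it works in the explicit embedding $\phi(x,y)=(1:x^2:x:y)$ coming from the Weierstrass form, so that the ideal of $C$ is generated by $F_1=XY-Z^2$ and $F_2=4YZ+pXZ+qX^2-W^2$. It then writes a general member as $bF_1+F_2$ (after disposing of $V(F_1)$, whose vertex is $Q_0=(0{:}0{:}0{:}1)$), computes the $4\times4$ symmetric matrix $M_b$, and finds that $\det M_b=0$ reduces to $b^3+4pb-16q=(b+4e_1)(b+4e_2)(b+4e_3)=0$; the three further vertices come out as $Q_i=(1:-c_i:e_i:0)$, and non-coplanarity is verified by a $3\times3$ determinant equal to $2(e_1-e_2)(e_2-e_3)(e_3-e_1)$.

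Your abstract route is cleaner and isolates the real content (simple discriminant $\Leftrightarrow$ smooth base locus), and the diagonalized normal form makes non-coplanarity immediate. The paper's computational route, however, is not gratuitous: the explicit coordinates $Q_i=(1:-c_i:e_i:0)$ are precisely what is asserted in the main theorem and are reused in the later identification of the $V_4$-lines and in locating the $Z_4$-line intersections. So while your proof establishes the lemma as stated, the paper's proof is simultaneously producing data needed downstream; if you adopt your approach you would still have to compute those coordinates somewhere.
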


\begin{remark}\label{23}
Let $\pi_Q : \mathbb P^3 \dasharrow \mathbb P^2$ be the projection with center $Q \in \mathbb P^3 \setminus C$. 
If $\pi_Q$ induces a $2$ to $1$ morphism from $C$ onto its image in Lemma \ref{13}, then $Q$ coincides with one of $Q_i$. 
\end{remark}

The main theorem is stated as follows:

\begin{theorem}\label{6}
For each linearly normal elliptic curve in $\mathbb P^3$, there exist four non-coplanar points $Q_i$ {\rm (}$0 \le i \le 3${\rm )}  
such that the lines passing through each two of them are $V_4$-lines for $C$. 
Namely, all the $V_4$-lines form the six edges of a tetrahedron. 
Further, if the Weierstrass normal form of $E$ is given by $y^2=4(x-e_1)(x-e_2)(x-e_3)$, then we can present explicitly the coordinates of $Q_i$ {\rm (}by taking a suitable coordinates 
of $\mathbb P^3${\rm )}  
as follows: 
\[
  Q_0=(0:0:0:1) \ \mathrm{and} \  Q_{i}=(1:-c_i:e_i:0), \ (i=1,2,3) , 
\]
where $c_i={e_i}^2+e_je_k$ such that $\{ i, j, k \}=\{1, 2, 3 \}$. 

\begin{center}
\setlength{\unitlength}{0.8mm}
\begin{picture}(120,70)
\put(49,4){$\circ$}
\put(89,24){$\circ$}
\put(29,24){$\circ$}
\put(59,54){$\circ$}
\put(63,55){$Q_0$}
\put(20,27){$Q_1$}
\put(55,4){$Q_2$}
\put(90,30){$Q_3$}
\put(60,55){\line(-1,-1){33}}
\put(60,55){\line(1,1){5}}
\put(60,55){\line(-1,-5){11}}
\put(60,55){\line(1,5){1}}
\put(60,55){\line(1,-1){33}}
\put(60,55){\line(-1,1){5}}
\put(30,25){\line(1,-1){23}}
\put(30,25){\line(-1,1){5}}
\put(50,5){\line(2,1){43}}
\put(50,5){\line(-2,-1){5}}
\put(30,25){\line(1,0){18}}
\put(30,25){\line(-1,0){5}}
\put(90,25){\line(-1,0){32}}
\put(90,25){\line(1,0){5}}
\end{picture}
\end{center}
\end{theorem}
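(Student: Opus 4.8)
The plan is to combine Lemmas \ref{5} and \ref{13} with Remark \ref{23}: first I would show that the six edges of the tetrahedron $Q_0Q_1Q_2Q_3$ are $V_4$-lines, and then that no other line can be one. Since $C$ is a linearly normal elliptic quartic it is the complete intersection of two quadrics, and the quadrics containing $C$ form a pencil $\mathcal P$ with base locus $C$. By Lemma \ref{13} the singular members of $\mathcal P$ are $S_0,\dots,S_3$, with non-coplanar vertices $Q_0,\dots,Q_3$. For $i\ne j$ the surfaces $S_i,S_j$ are distinct members of $\mathcal P$, so $S_i\cap S_j$ is the base locus $C$, which is smooth; thus hypotheses (1) and (2) of Lemma \ref{5} hold for the pair $(S_i,S_j)$. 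For (3) I would use that non-coplanar vertices form a self-polar tetrahedron, so in suitable coordinates the $Q_i$ are the coordinate points and the two generators of $\mathcal P$ are simultaneously diagonal, say $\mathrm{diag}(a_0,\dots,a_3)$ and $\mathrm{diag}(b_0,\dots,b_3)$. A point of $\ell_{Q_iQ_j}$ lying on $C$ would then force $a_ib_j-a_jb_i=0$, which is exactly the condition that $S_i$ and $S_j$ coincide; as the four singular members are distinct this is impossible, so $\ell_{Q_iQ_j}\cap C=\varnothing$ and Lemma \ref{5} applies.

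\emph{There are no other $V_4$-lines.} Let $\ell$ be any $V_4$-line with $G_\ell=\{1,\sigma_1,\sigma_2,\sigma_3\}\cong V_4$. Each $\sigma_m$ extends to a projective involution of $\mathbb P^3$; since it permutes the fibres of $\pi_\ell$ it must preserve every plane through $\ell$, hence in coordinates with $\ell=V(x_2)\cap V(x_3)$ it has block form $\begin{pmatrix}A_m & B_m\\ 0 & I\end{pmatrix}$, acting by a scalar on the transverse directions. Analysing $A_m\in GL_2$ shows each nontrivial $\sigma_m$ is either an involution with two skew fixed lines, one of them $\ell$ (so $\sigma_m$ fixes $\ell$ pointwise), or a harmonic homology whose isolated centre lies on $\ell$. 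Now the standard genus relation for Klein-four covers gives $g(C)=\sum_m g(C/\sigma_m)$; since $g(C)=1$, exactly one quotient is elliptic and two are rational. A homology fixes on $C$ the four points cut by its fixed plane, so it ramifies and has rational quotient; an involution with second axis $\ell'$ fixes on $C$ only $C\cap\ell'$, which has at most three points because an elliptic normal quartic has no $4$-secant line (such a line would force the hyperplane sections through it to coincide), and then Riemann--Hurwitz forces an \emph{étale} cover with elliptic quotient. Hence exactly two of the $\sigma_m$ are homologies, with centres $R,R'\in\ell$. Because $\ell\cap C=\varnothing$ we have $R,R'\notin C$, and $\pi_R,\pi_{R'}$ are the quotient maps by these homologies, hence $2$-to-$1$ onto their images. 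By Remark \ref{23}, $R$ and $R'$ are among the $Q_i$; being two distinct points of $\ell$, they give $\ell=\ell_{RR'}$, an edge. With the first step this proves the $V_4$-lines are exactly the six edges.

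\emph{Explicit coordinates.} Embedding $E$ by $|4O|$ with basis $1,x,x^2,y$ of $\mathcal L(4O)$, I would realise $C$ as $V(x_0x_2-x_1^2)\cap V(x_3^2-4x_1x_2+g_2x_0x_1+g_3x_0^2)$. The first quadric is a cone with vertex $(0:0:0:1)=Q_0$. Computing the discriminant of the pencil of the two symmetric matrices locates the remaining three singular values, and the vertices are read off as the kernels of the corresponding matrices; a direct substitution identifies them as $(1:-c_i:e_i:0)$ with $c_i=e_i^2+e_je_k$ and confirms non-coplanarity.

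\emph{Main obstacle.} The delicate step is completeness: showing that every $V_4$-line is an edge. The crux is to prove that two of its three involutions are homologies whose centres lie on $\ell$, for only then can Remark \ref{23} be invoked to pin those centres to the $Q_i$. This in turn requires combining the genus relation with the non-existence of $4$-secant lines to control the fixed loci on $C$, together with the block normal form that forces each homology centre onto $\ell$.
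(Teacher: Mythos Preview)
Your argument is correct, and for the existence of the six edge lines and the explicit coordinates you follow the paper's route via Lemmas~\ref{5} and~\ref{13}. The genuine divergence is in the upper bound. The paper does \emph{not} use Remark~\ref{23} here; instead it proves Lemma~\ref{12} by passing to the universal cover $\mathbb C\to E$: if $G_\ell=\langle\sigma,\tau\rangle$ with $\sigma(z)=-z+\alpha$, $\tau(z)=z+\beta$, the condition that the $G_\ell$-orbit of $P_0'$ be a hyperplane section forces $2\alpha\equiv 0\pmod{\mathcal L}$, and together with $2\beta\equiv 0$ this leaves exactly the six subgroups $G_{ij}=\langle\sigma_i,\sigma_j\rangle$. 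Injectivity of the arrangement map $\rho$ (Lemma~\ref{15}) then bounds the number of $V_4$-lines by six. Your approach is purely projective-geometric: you classify the three involutions of $G_\ell$ on $\mathbb P^3$ as biaxial or homologies, use the Accola--Kani--Rosen relation $g(C)=\sum_m g(C/\sigma_m)$ (valid here because $g(C/G_\ell)=0$) to force exactly two homologies, and then invoke Remark~\ref{23} to identify their centres with two of the $Q_i$. What your route buys is a direct geometric explanation of \emph{why} every $V_4$-line is an edge of the tetrahedron, without touching $\mathrm{Aut}(E)$ or the map $\rho$; what the paper's route buys is a uniform mechanism (the condition $(\diamondsuit)$ of Lemma~\ref{16}) that also handles the $Z_4$-lines in Theorem~\ref{8} with no extra work. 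Two small points worth tightening in your write-up: your parenthetical justification for ``no $4$-secant line'' is vague (the clean argument is that a $4$-secant would lie on every quadric through $C$, hence on $C$ itself); and the genus relation you quote as ``standard'' is really $g(C)=\sum_m g(C/\sigma_m)-2g(C/G)$, which reduces to your formula only because $C/G\cong\mathbb P^1$.
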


\begin{remark}\label{7}
In the case of an elliptic curve $E$ in $\mathbb P^2$ it has a Galois point 
if and only if $j(E)=0$, and then it has just three $Z_3$-points. 
\end{remark}

In the case where the $j$-invariant $j(C)=1$, there exists an automorphism of order four with a fixed point. 
This curve has the other Galois lines as follows. 

\begin{theorem}\label{8}
Under the same assumption as in Theorem \ref{6}, if $j(C)=1$, then there exist eight 
$Z_4$-lines {\rm (}in addition to the $V_4$-lines{\rm )}. 
To state in more detail, for each vertex $Q_i$ {\rm (}$0 \le i \le 3${\rm )} of the tetrahedron in Theorem \ref{6}, there exist two $Z_4$-lines passing through it.  
Therefore, for each vertex, there exist three $V_4$-lines and two $Z_4$-lines passing through it  
and the total number of Galois lines is fourteen. 
Two $Z_4$-lines do not meet except at one of the vertices.  
\end{theorem}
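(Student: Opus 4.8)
The plan is to translate the existence of a $Z_4$-line into the existence of a suitable order-four automorphism and then to count these automorphisms group-theoretically. Write $C$ as the image of $E$ under $|D|$ with $\deg D = 4$. If $\ell$ is a $Z_4$-line then $G_\ell = \langle\sigma\rangle \cong Z_4$ and, by linear normality, $\sigma$ extends to an element of $\mathrm{PGL}_4(\mathbb{C})$ preserving $C$; conversely the Galois covering $\pi_\ell|_C$ is nothing but the quotient morphism $C \to C/\langle\sigma\rangle$. The hypothesis $j(C)=1$ is used precisely here: it guarantees that $\mathrm{Aut}(E,O)$ is cyclic of order four, generated by an automorphism $\rho$ with $\rho^2 = \iota$ (the inversion $P \mapsto -P$), so that every order-four automorphism of $C$ has the form $t_a\circ\rho^{\pm 1}$ with $t_a$ a translation. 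Since translations act transitively on $\mathrm{Pic}^4(E)$, I may take $D = 4O$; then $\sigma = t_a\rho$ extends to $\mathbb{P}^3$, i.e. $\sigma^*D \sim D$, exactly when $a \in E[4]$.

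The key reduction is a criterion for such a $\sigma$ to produce a genuine Galois \emph{line}. First I would observe that $\langle\sigma\rangle$ yields a $Z_4$-line if and only if the quotient morphism $C \to C/\langle\sigma\rangle \cong \mathbb{P}^1$ is induced by a two-dimensional subsystem of $|D|$, equivalently if and only if $D$ is the pullback of a point under this quotient. Indeed, for a $Z_4$-line the pencil cut out on $C$ by the planes through $\ell$ is such a subsystem, and comparing degrees forces $D \sim \pi_\ell^{*}(\mathrm{pt})$. To make this computable I would describe a general fibre as the orbit $\{P, \sigma P, \sigma^2P, \sigma^3 P\}$; a direct computation of its group-law sum for $\sigma = t_a\rho$ gives $(2 + 2\sqrt{-1})\,a$, so $D = 4O$ is a pullback precisely when $(2+2\sqrt{-1})a = O$. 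Counting the solutions of this equation in $E[4] \cong \mathbb{Z}[\sqrt{-1}]/4\mathbb{Z}[\sqrt{-1}]$ yields exactly eight values of $a$; since distinct values give distinct subgroups (in $\langle t_a\rho\rangle$ the unique generator of the form $(\text{translation})\circ\rho$ is $t_a\rho$, so $a$ is recoverable) and each $Z_4$-line has a unique Galois group, this produces exactly eight $Z_4$-lines and shows there are no others.

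To recover the arrangement I would analyze $\sigma^2$. One computes $\sigma^2 = t_{(1+\sqrt{-1})a}\circ\iota$, an involution $P \mapsto c - P$ with $c = (1+\sqrt{-1})a$; for the eight admissible $a$ one has $c \in E[2]$, and these four involutions are exactly the deck transformations $\tau_0,\dots,\tau_3$ of the projections from the four vertices $Q_0,\dots,Q_3$ of Lemma \ref{13} (Remark \ref{23}). Solving $(1+\sqrt{-1})a = c_i$ for fixed $c_i$ has exactly two solutions, because $\ker(1+\sqrt{-1})$ meets $E[4]$ in a group of order two; this distributes the eight lines as two per vertex. Moreover $\sigma$ commutes with $\sigma^2 = \tau_i$, hence fixes the isolated $(-1)$-eigenpoint of $\tau_i$, which is precisely $Q_i$; since this point is a $(\pm\sqrt{-1})$-eigenpoint of $\sigma$, it lies on the axis $\ell_\sigma = \langle R_\sigma, Q_i\rangle$ (the span of the non-invariant eigenvectors of $\sigma$, with $R_\sigma$ the $(-1)$-eigenpoint). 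Thus each of the two lines attached to $\tau_i$ passes through $Q_i$, and together with the three $V_4$-edges through $Q_i$ from Theorem \ref{6} one gets five Galois lines through each vertex and fourteen in total.

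The last point, and the main obstacle, is the incidence statement that two $Z_4$-lines meet only at a vertex. Two distinct lines through a common $Q_i$ automatically meet only at $Q_i$, so the real content is that lines attached to different vertices $Q_i, Q_j$ are skew. Here I expect to need the explicit eigenvectors: I would verify, using the coordinates of Theorem \ref{6}, that the four points $R_\sigma, Q_i, R_{\sigma'}, Q_j$ are in general position, so that the two axes span $\mathbb{P}^3$ and cannot meet. Making the pullback criterion fully rigorous (in particular the dependence of the invariant subsystem on the choice of linearization of $\sigma$) and carrying out this general-position check are the places where genuine computation, rather than formal group theory, is required.
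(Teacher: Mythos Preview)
Your counting argument is essentially the paper's: both reduce the existence of a $Z_4$-line to a cyclic subgroup $\langle\sigma\rangle\subset\mathrm{Aut}(C)$ with $\sigma(z)=iz+\alpha$ satisfying the orbit--sum condition, and both arrive at $2(1+i)\alpha\equiv 0\pmod{\mathcal L}$ with exactly eight solutions. Your use of the criterion ``$D$ is the pullback of a point under $C\to C/\langle\sigma\rangle$'' is the content of the paper's Lemma~\ref{16} (condition $(\diamondsuit)$), and your observation that distinct $\alpha$ give distinct subgroups matches the injectivity of $\rho$ (Lemma~\ref{15}).

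Where you diverge is in the incidence analysis, and here the paper has a cleaner tool that you are not using. Rather than eigenvector computations, the paper invokes Lemma~\ref{11}: if two Galois lines $\ell_1,\ell_2$ meet, the intersection point is necessarily one of the $Q_i$, and $G_{\ell_1}\cap G_{\ell_2}$ is generated by a fixed-point involution. For $Z_4$-lines the only involution in $\langle\sigma\rangle$ is $\sigma^2$, so $\ell(\alpha)$ and $\ell(\alpha')$ meet \emph{if and only if} $\sigma_\alpha^2=\sigma_{\alpha'}^2$, i.e.\ $(1+i)\alpha=(1+i)\alpha'$. This immediately gives the four meeting pairs and shows all other pairs are skew, with no general-position check needed. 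The identification of the common point with the correct $Q_i$ is then read off from the explicit $V_4$-line equations (Lemma~\ref{25}), since $\sigma^2$ lies in two of the $V_4$-groups $G_{ij}$ and the corresponding edges meet at a known vertex.

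Your eigenvector route can be made to work, but two steps are not yet justified: that the $(-1)$-eigenspace of $\tau_i$ on $\mathbb P^3$ is one-dimensional (so that $Q_i$ is genuinely ``isolated''), and the final general-position claim for $R_\sigma,Q_i,R_{\sigma'},Q_j$. Both require the explicit representation, whereas Lemma~\ref{11} bypasses them entirely.
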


Let $\Sigma$ be the set of six $V_4$-lines in Theorem \ref{6}. In the case where $j(C)=1$ let $\Sigma'$ 
be the set of eight $Z_4$-lines in Theorem \ref{8}. 
The following corollary is an answer to the question for the case of outer Galois point \cite[Theorem 2]{m}. 
 
\begin{corollary}\label{18} 
For a plane quartic curve $\Gamma$ with genus one, the number of {\rm (}outer{\rm )} Galois points 
is at most one. If $\Gamma$ has the Galois point, then the Galois group $G$ is isomorphic to $V_4$ or $Z_4$. 
Further, if $G \cong V_4$ {\rm (}resp. $Z_4${\rm )}, then $\Gamma$ is obtained by a projection $\pi_Q : \mathbb P^3 \dasharrow \mathbb P^2$ with 
center $Q$, where $Q \in \Sigma$ {\rm (}resp. $Q \in \Sigma'${\rm )} such that $Q \ne Q_i$ {\rm (}$0 \le i \le 3${\rm )}. 
\end{corollary}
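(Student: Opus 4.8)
The plan is to relate plane quartic curves of genus one to the space elliptic curves studied in the theorems via the standard projection construction, and then transfer the Galois-line classification to a Galois-point classification. First I would recall that a plane quartic $\Gamma$ of geometric genus one has exactly three nodes (or their infinitely-near degenerations), so its normalization is an elliptic curve $E$, and the composition of the normalization $E \to \Gamma$ with the embedding fits the pattern of a projection. Concretely, an (outer) Galois point $P$ for $\Gamma$ is a point such that projection $\pi_P:\mathbb{P}^2 \dashrightarrow \mathbb{P}^1$ from $P$ gives a Galois covering of degree $4$ from $\Gamma$. The key idea is to lift $\Gamma$ back up to $\mathbb{P}^3$: since $\Gamma$ has arithmetic genus $3$ but geometric genus $1$, its normalization $C \subset \mathbb{P}^3$ is a linearly normal elliptic curve of degree $4$ whose projection from a suitable center $Q$ recovers $\Gamma$. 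Thus I would set up a correspondence between outer Galois points of $\Gamma$ and those centers $Q$ for which $\pi_Q:\mathbb{P}^3 \dashrightarrow \mathbb{P}^2$ maps $C$ birationally to $\Gamma$ while a second projection yields a Galois covering of $\mathbb{P}^1$.

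Next I would make precise how a Galois point $P$ of $\Gamma$ corresponds to a Galois line $\ell$ of $C$. Given the Galois point $P \in \mathbb{P}^2$ and the projection center $Q \in \mathbb{P}^3$ (with $\pi_Q(C)=\Gamma$), the line $\ell_{PQ}$ in $\mathbb{P}^3$ joining $Q$ to the point lying over $P$ (equivalently, the line $\ell = \overline{\pi_Q^{-1}(P)}$) should serve as a Galois line for $C$, because the composed projection $\pi_\ell|_C = \pi_P \circ \pi_Q|_C$ is exactly the degree-$4$ covering $C \to \mathbb{P}^1$, and its Galois property is inherited. Conversely, every Galois line of $C$ passing through $Q$ projects to a Galois point of $\Gamma = \pi_Q(C)$. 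So the count of Galois points of $\Gamma$ with a given Galois group equals the count of Galois lines of $C$ through $Q$ with that group, provided $Q$ is an admissible center (i.e. $\pi_Q$ is birational on $C$, so $Q \notin \{Q_0,\dots,Q_3\}$ by Remark \ref{23}, since those are exactly the centers giving a $2:1$ map). This explains the stated restriction $Q \neq Q_i$ in the corollary.

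With this correspondence established, the counting becomes a matter of invoking Theorems \ref{6} and \ref{8}. By Theorem \ref{6}, the six $V_4$-lines of $C$ form the edges of the tetrahedron $Q_0Q_1Q_2Q_3$, and by Theorem \ref{8} (when $j(C)=1$) the eight $Z_4$-lines come in pairs through the vertices $Q_i$. A generic admissible center $Q$ (lying on none of these fourteen lines and distinct from the $Q_i$) has no Galois line of $C$ passing through it, hence yields no Galois point; a center $Q$ lying on exactly one of the six $V_4$-edges (but not at a vertex) yields exactly one $V_4$-Galois point for $\Gamma = \pi_Q(C)$, which corresponds to taking $Q \in \Sigma$ with $Q \neq Q_i$; and similarly a center on a $Z_4$-line (with $Q \neq Q_i$) in the $j(C)=1$ case yields exactly one $Z_4$-Galois point, i.e. $Q \in \Sigma'$ with $Q \neq Q_i$. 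The crucial point forcing the bound ``at most one'' is that no admissible center can lie on two distinct Galois lines of $C$ simultaneously: two of the lines in $\Sigma \cup \Sigma'$ meet only at a vertex $Q_i$ of the tetrahedron (for the $V_4$-edges this is the tetrahedral incidence; for the $Z_4$-lines it is the last sentence of Theorem \ref{8}), and the vertices are precisely the excluded centers. Hence any admissible $Q$ lies on at most one Galois line, giving at most one Galois point.

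The main obstacle I anticipate is justifying the correspondence rigorously in both directions, particularly verifying that $\pi_Q(C)$ is indeed a plane quartic of genus one and that every outer Galois point of an arbitrary genus-one plane quartic $\Gamma$ arises from some linearly normal space elliptic curve $C$ via such a projection (so that no Galois points are missed). This requires identifying the embedding $E \hookrightarrow \mathbb{P}^3$ associated to the degree-$4$ divisor $D$ whose projection is the given $\Gamma$, and checking that the Galois property of $\pi_P|_\Gamma$ lifts to $\pi_\ell|_C$ without introducing or losing automorphisms. Once the covering $\pi_\ell|_C$ is identified with $\pi_P|_\Gamma$ on function fields, the Galois correspondence for fields makes the group isomorphism $G_\ell \cong G$ automatic; the delicate part is handling the intersection behavior of $\ell$ with $C$ (ensuring $\ell \cap C = \emptyset$, as required by Definition \ref{1}) and confirming admissibility of $Q$ via Remark \ref{23}.
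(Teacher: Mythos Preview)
Your overall strategy---lift $\Gamma$ to the linearly normal elliptic curve $C \subset \mathbb{P}^3$ via $|4P_0|$, identify outer Galois points of $\Gamma$ with Galois lines of $C$ passing through the projection center $Q$, and then argue that an admissible center cannot lie on two Galois lines---is exactly the paper's approach.

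There is, however, a gap in your incidence argument. You assert that any two lines in $\Sigma \cup \Sigma'$ meet only at a vertex $Q_i$, citing the tetrahedral edge structure for pairs in $\Sigma$ and the last sentence of Theorem~\ref{8} for pairs in $\Sigma'$. But neither Theorem~\ref{6} nor Theorem~\ref{8} tells you where a $V_4$-line meets a $Z_4$-line; that mixed case is not covered by what you cite, and without it your ``at most one'' bound is incomplete when $j(C)=1$. The paper closes this differently and more uniformly: instead of appealing to the explicit arrangement of Galois lines, it invokes Lemma~\ref{11}(2) (packaged in Remark~\ref{20}). Lemma~\ref{11}(2) says that whenever \emph{any} two Galois lines meet, their intersection point is the singular point of a quadric cone containing $C$, hence by Lemma~\ref{13} one of the $Q_i$; Remark~\ref{20} then gives the contradiction, since projection from such a $Q_i$ is $2:1$ onto a conic rather than birational onto a quartic. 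This single stroke handles $V_4$--$V_4$, $Z_4$--$Z_4$, and $V_4$--$Z_4$ intersections at once, and would patch your argument immediately if you replace the case-by-case incidence check with a direct appeal to Lemma~\ref{11}.
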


\begin{remark}\label{22}
Different from the case of the space quartic curve, a plane quartic curve of genus one   
does not necessarily have a Galois point. 
\end{remark}

\begin{remark}\label{4}
Since $C$ is given by the embedding associated with a complete linear system 
and has a Galois line, the embedding is called a Galois embedding, which has been defined in \cite{y3}.   
\end{remark}

\section{Proofs}
First we prove Lemma \ref{5}. 
It is easy to see that $\Delta$ has genus one and \\ $\dim \operatorname{H}^0(\Delta, \ \mathcal O_{\Delta}(1))=4$. Hence $\Delta$ is a linearly normal elliptic curve.  
Let $\pi_{Q_i}$ be the projection $\mathbb P^3 \dasharrow P^2$ with center $Q_i$ ($i=1,\ 2$) 
and put $\Delta_i=\pi_{Q_i}(\Delta) \subset \mathbb P^2$ and $R_i=\pi_{Q_i}(\ell \setminus \{ Q_i \})$. 
Then $\Delta_i$ is a conic and $R_i$ is a point not on $\Delta_i$. 
Let $\varpi_{R_i}$ be the projection $\mathbb P^2 \dasharrow \mathbb P^1$ with center $R_i$. 
Restricting $\varpi_{R_i}$ to $\Delta_i$, we get a surjective morphism $\varpi_{R_i}|_{\Delta_i} : C_i \longrightarrow \mathbb P^1$. 
Therefore we have two morphisms 
\[
\pi_i=\varpi_{Q_i} \cdot \pi_{Q_i} : \Delta \longrightarrow \mathbb P^1 
\] 
of degree four. They coincide with the restriction of the projection 
$\pi_{\ell} : \mathbb P^3 \dasharrow \mathbb P^1$. 
Note that $k(\Delta_1)$ and $k(\Delta_2)$ are distinct subfields of $k(\Delta)$ 
and $[k(\Delta):k(\Delta_i)]=[k(\Delta_i):k(\mathbb P^1)]=2$. 
We infer that $k(\Delta)$ is a $V_4$-extension of $k(\mathbb P^1)$, hence $\pi_{\ell}|_{\Delta}$ is a $V_4$-Galois covering. 
This proves Lemma \ref{5}. 

Fix a universal covering $\pi : \mathbb C \longrightarrow \mathbb C/\mathcal L$, where $\mathcal L$ 
is the lattice in $\mathbb C$ defining a complex torus. 
We assume $\mathcal L=\mathbb Z+\mathbb Z\omega$, where $\Im \omega >0$. 
Let $\wp(z)$ be the Weierstrass $\wp$-function with respect to $\mathcal L$. 
Then, the map $\varphi : \mathbb C \longrightarrow E $ defined by \\ $\varphi(z)=(\wp(z) : \ \wp'(z) : 1)$, 
induces an isomorphism $\bar{\varphi} : \mathbb C/\mathcal L \longrightarrow E$. 
The defining equation of the elliptic curve $E$ is the Weierstrass normal form $y^2=4x^3+px+q$. We assume it to be
 factored as $4(x-e_1)(x-e_2)(x-e_3)$. 
Put 
$P_{\alpha}=\varphi(\alpha)$ for $\alpha \in \mathbb C$.  
Denote by $+$ the sum of divisors on $E$ and, at the same time, the sum of complex numbers. For example, 
$P_{\alpha}+P_{\beta}$ and $\alpha+\beta$ denote the sum of divisors and complex numbers respectively. 

\begin{lemma}\label{10}
We have the linear equivalence of divisors on $E${\rm :}  
\[
P_{\alpha}+P_{\beta} \sim P_{\alpha + \beta} + P_{0}. 
\]  
\end{lemma}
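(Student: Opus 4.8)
The plan is to exhibit a meromorphic function on $E$ whose divisor is exactly $P_\alpha + P_\beta - P_{\alpha+\beta} - P_0$; equivalently, to invoke Abel's theorem for the complex torus $\mathbb C/\mathcal L$. Since $\bar\varphi$ identifies $E$ with $\mathbb C/\mathcal L$ and sends each $P_\gamma$ to the class of $\gamma$, it suffices to work on the torus and check that the divisor in question is principal. The conceptual reason the lemma holds is simply that the group law on $E$ (with origin $P_0$) agrees with addition of complex numbers modulo $\mathcal L$, and Lemma \ref{10} is precisely the infinitesimal bookkeeping of that fact.

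First I would recall Abel's theorem in the form adapted to the torus: a degree-zero divisor $\sum_i n_i P_{\gamma_i}$ on $E$ is linearly equivalent to $0$ if and only if $\sum_i n_i \gamma_i \equiv 0 \pmod{\mathcal L}$. Applying this to $D = P_\alpha + P_\beta - P_{\alpha+\beta} - P_0$, the degree is $1+1-1-1=0$ and the weighted sum of arguments is $\alpha + \beta - (\alpha+\beta) - 0 = 0 \in \mathcal L$. Hence $D \sim 0$, which is the claimed equivalence.

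To keep the argument self-contained rather than appealing to Abel's theorem, I would instead construct the function explicitly from the Weierstrass sigma function $\sigma(z)$ attached to $\mathcal L$, whose only zeros are simple zeros at the lattice points. Set
\[
f(z) = \frac{\sigma(z-\alpha)\,\sigma(z-\beta)}{\sigma(z-\alpha-\beta)\,\sigma(z)}.
\]
On $\mathbb C$ this has simple zeros at $z \equiv \alpha, \beta$ and simple poles at $z \equiv \alpha+\beta, 0$ modulo $\mathcal L$, so once $f$ is known to descend to $E$, its divisor is exactly $D$, giving $\operatorname{div}(f) = P_\alpha + P_\beta - P_{\alpha+\beta} - P_0$ and hence the lemma.

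The only real obstacle is verifying that $f$ is genuinely $\mathcal L$-periodic, so that it descends to a rational function on $E$. Here I would use the quasi-periodicity $\sigma(z+\lambda) = -\exp\bigl(\eta(\lambda)(z+\tfrac{\lambda}{2})\bigr)\sigma(z)$ for $\lambda \in \{1,\omega\}$, with $\eta$ additive in $\lambda$. Translating $z \mapsto z+\lambda$ produces two sign factors in the numerator and two in the denominator, which cancel because the numbers of zeros and poles are equal; the accumulated exponential factor is $\exp\bigl(\eta(\lambda)[(\alpha+\beta+0)-(\alpha+\beta)]\bigr)=\exp(0)=1$, precisely because the argument-sum of the zeros equals that of the poles. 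Thus $f(z+\lambda)=f(z)$ for both periods, so $f \in k(E)$ with the desired divisor, completing the proof.
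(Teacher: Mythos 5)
Your proposal is correct. The paper itself offers no argument at all for Lemma \ref{10}: it simply declares the statement well known and cites Hartshorne, Ch.~IV, Theorem 4.13B, which is in substance the identification of $\mathbb C/\mathcal L$ with $\mathrm{Pic}^0(E)$, i.e.\ Abel's theorem for the torus. Your first paragraph therefore reproduces exactly the content the paper is outsourcing: the divisor $P_{\alpha}+P_{\beta}-P_{\alpha+\beta}-P_{0}$ has degree zero and argument-sum zero, hence is principal. What you add beyond the paper is the self-contained verification via the Weierstrass sigma function, and that part is carried out correctly: the function
\[
f(z)=\frac{\sigma(z-\alpha)\,\sigma(z-\beta)}{\sigma(z-\alpha-\beta)\,\sigma(z)}
\]
has the right zeros and poles, and your quasi-periodicity computation is the right one --- the two sign factors upstairs cancel the two downstairs, and the exponential factor $\exp\bigl(\eta(\lambda)\,[(\alpha+\beta+0)-(\alpha+\beta+0)]\bigr)=1$ vanishes precisely because the zero-sum equals the pole-sum, which is the same numerical condition Abel's theorem isolates. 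So the two routes are really the same criterion seen from two sides: the citation buys brevity, while your explicit construction buys independence from the reference and makes visible \emph{why} the lattice condition is the whole story. One cosmetic caution: the paper already uses $\sigma$ (and $\sigma_i$) for automorphisms of $E$, so if this argument were inserted into the text the Weierstrass sigma function should be given a different symbol to avoid a clash.
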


\begin{proof}
This may be well-known. See, for example, \cite[Ch. IV, Theorem 4,13B]{h}.  
\end{proof}

\begin{lemma}\label{9}
Let $D$ be the divisor of degree four on $E$. 
By taking a suitable translation $\tau$ on $E$,  we have $\tau^*(D) \sim 4P_0$. 
\end{lemma}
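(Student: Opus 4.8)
The plan is to exploit the group law on $E \cong \mathbb C/\mathcal L$ together with Lemma \ref{10}. Write $D = \sum_i n_i P_{\alpha_i}$ with $\sum_i n_i = 4$. Lemma \ref{10} gives $P_\alpha + P_\beta \sim P_{\alpha+\beta} + P_0$, and taking $\beta = -\alpha$ yields $P_\alpha + P_{-\alpha} \sim 2P_0$, which lets one absorb the negative $n_i$ as well. Iterating, any divisor of degree $d$ reduces to the normal form
\[
D \sim P_{\gamma} + (d-1)P_0, \qquad \gamma \equiv \sum_i n_i \alpha_i \pmod{\mathcal L},
\]
so in our situation $D \sim P_\gamma + 3P_0$. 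This is just the Abel--Jacobi description of $\operatorname{Pic}(E)$: beyond its degree, $\gamma$ is the only invariant of $D$.

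Next I would compute the effect of a translation. For $c \in \mathbb C$ let $\tau$ be the translation on $E$ induced by $z \mapsto z + c$, so that $\tau(P_\alpha) = P_{\alpha + c}$ and hence $\tau^{-1}(P_\beta) = P_{\beta - c}$. Since $\tau$ is an isomorphism its pullback of a point is unramified, giving $\tau^*(P_\beta) = P_{\beta - c}$ and therefore
\[
\tau^*(D) = \sum_i n_i P_{\alpha_i - c} \sim P_{\gamma - 4c} + 3P_0,
\]
where the last equivalence is again the normal form, using $\sum_i n_i(\alpha_i - c) = \gamma - 4c$.

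Finally, comparing with $4P_0 = P_0 + 3P_0$, it suffices to choose $c$ so that $\gamma - 4c \equiv 0 \pmod{\mathcal L}$, i.e.\ $P_{\gamma - 4c} \sim P_0$. Because multiplication by $4$ is surjective on $\mathbb C/\mathcal L$ (indeed on $\mathbb C$ one may simply set $c = \gamma/4$), such a $c$ exists, and for the corresponding $\tau$ we obtain $\tau^*(D) \sim 4P_0$.

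I do not expect a serious obstacle: the content is entirely the reduction to normal form via Lemma \ref{10} and the divisibility of $E$. The only points needing a little care are the bookkeeping of the pullback under $\tau$ (the sign of $c$ and the fact that the point-pullback is ramification free) and the application of the reduction formula to divisors that need not be effective, which is handled through $P_\alpha + P_{-\alpha} \sim 2P_0$.
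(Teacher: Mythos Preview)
Your argument is correct and follows essentially the same route as the paper: use Lemma~\ref{10} to reduce a degree-four divisor to the form $P_\gamma + 3P_0$ with $\gamma \equiv \sum n_i\alpha_i$, then translate by $c=\gamma/4$ to kill $\gamma$. The paper's version is slightly terser---it assumes $D=\sum_{i=1}^4 P_{\alpha_i}$ is effective, sets $\beta=-\tfrac14\sum\alpha_i$, and applies Lemma~\ref{10} only after pulling back---but the content and the key idea (divisibility of the group $E$ by $4$) are identical.
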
 

\begin{proof}
Suppose $D=\sum_{i=1}^4 P_{\alpha_i}$. Then, take 
$\beta=-\sum_{i=1}^4 \alpha_i/4$. 
Let $\tau$ be the translation on $E$ induced from the one $z \mapsto z+\beta$ on $\mathbb C$. 
Then we have  
$\tau^*(D)=\sum_{i=1}^4 P_{\alpha_i+\beta} $. 
Using Lemma \ref{10}, we get $\tau^*(D) \sim 4P_0$. 
\end{proof}

Let $D$ be a hypeplane section of $C$. Applying Lemma \ref{9}, we see that 
there exists an elliptic curve $C_0$ in $\mathbb P^3$ given by the embedding associated with $|4P_0|$ 
and an isomorphism $\psi : \mathbb P^3 \longrightarrow \mathbb P^3$ satisfying that 
$\psi(C_0)=C$ and $4P_0 \sim {\psi}^*(D)$. 
So that we have the following lemma. 

\begin{lemma}\label{19}
We can assume $C$ is given by the embedding associated with $|4P_0|$.
\end{lemma}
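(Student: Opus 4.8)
The plan is to reduce the study of Galois lines for an arbitrary linearly normal elliptic curve $C$ to the single normalized model embedded by $|4P_0|$, exploiting the fact that the two embeddings differ only by a projective transformation of $\mathbb P^3$. The setup preceding the statement already supplies what is needed: writing $D$ for a hyperplane section of $C$ and applying Lemma \ref{9}, there is a translation $\tau$ on $E$ with $\tau^*(D) \sim 4P_0$, and this produces an elliptic curve $C_0 \subset \mathbb P^3$ embedded by $|4P_0|$ together with an isomorphism $\psi : \mathbb P^3 \longrightarrow \mathbb P^3$ such that $\psi(C_0)=C$.

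First I would make explicit that $\psi$ is a projective linear transformation. This comes from the identification of the two projective spaces $\mathbb P(\mathrm H^0(E,D)^*)$ and $\mathbb P(\mathrm H^0(E,4P_0)^*)$: since $\tau^*(D) \sim 4P_0$, pullback by $\tau$ carries one complete linear system onto the other, hence induces a linear isomorphism of the spaces of sections and therefore a projectivity $\psi$ of the ambient $\mathbb P^3$ compatible with the two embeddings. (Alternatively, any isomorphism of $\mathbb P^3$ lies in $\mathrm{PGL}_4$, so the point is automatic.)

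The key step is to check that all Galois-theoretic and geometric data are invariant under $\psi$. For a line $\ell$ not meeting $C$, the projection $\pi_\ell|_C$ is intertwined by $\psi$ with the projection $\pi_{\psi^{-1}(\ell)}|_{C_0}$, up to a projectivity of the target line; consequently $k(C)/k(\ell_0)$ and the corresponding extension for $C_0$ are isomorphic as field extensions. Thus $\ell$ is a Galois line for $C$ if and only if $\psi^{-1}(\ell)$ is a Galois line for $C_0$, and the Galois groups are isomorphic, so $V_4$-lines correspond to $V_4$-lines and $Z_4$-lines to $Z_4$-lines. Likewise the singular quadrics of Lemma \ref{13}, their singular points $Q_i$, and the resulting tetrahedron are carried by $\psi$ from $C_0$ to $C$. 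Hence every assertion about Galois lines, their count, their group type, and their arrangement is the same for $C$ and for $C_0$.

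I do not expect a serious obstacle here, since the statement is a normalization that transfers a property along a projective isomorphism. The only point requiring care is the invariance claim above: one must confirm that the Galois property depends on $\ell$ alone and is preserved by projectivities, which is immediate from Definition \ref{1}, because the extension $k(C)/k(\ell_0)$ is independent of the choice of $\ell_0$ and a projective change of coordinates induces an isomorphism of function fields. Granting this, we may assume without loss of generality that $C$ is the curve embedded by $|4P_0|$, which is the content of the lemma.
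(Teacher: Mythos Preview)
Your proposal is correct and follows the same approach as the paper: use Lemma~\ref{9} to obtain a translation $\tau$ with $\tau^*(D)\sim 4P_0$, and then observe that the resulting embeddings differ by a projective isomorphism $\psi$ of $\mathbb P^3$. The paper records only the existence of $\psi$ with $\psi(C_0)=C$ and immediately concludes the lemma, whereas you additionally spell out the (implicit) invariance of Galois lines, Galois groups, and the singular quadrics under $\psi$; this extra justification is sound and simply makes explicit what the paper leaves to the reader.
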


Therefore it is sufficient for our purpose to consider the curve embedded by $|4P_0|$. 
Let $\phi : E \longrightarrow C \subset \mathbb P^3$ be the embedding of $E$ associated with $|4P_0|$. 

\begin{center}
\setlength{\unitlength}{0.8mm}
\begin{picture}(120,70)
\put(20,50){$\mathbb C$}
\put(20,10){$\mathbb C/\mathcal L$}
\put(60,10){$E$}
\put(100,10){$\mathbb P^3$}
\put(15,30){$\pi$}
\put(43,32){$\varphi$}
\put(40,5){$\bar{\varphi}$}
\put(75,5){$\phi$}
\put(86,10){$C$}
\put(93,10){$\subset$}
\put(22,45){\vector(0,-1){28}}
\put(31,11){\vector(1,0){25}}
\put(25,45){\vector(1,-1){30}}
\put(66,11){\vector(1,0){18}}
\end{picture}
\end{center}

In order to study the number and arrangement of Galois lines, we provide some lemmas.  
Let $\mathcal S$ and $\mathcal G$ be the set of Galois lines  for $C$ and the set of subgroups of Aut$(C)$ respectively.   
Since a Galois line $\ell$ determine the Galois group $G_{\ell}$ in Aut$(C)$ uniquely, we can define the following map.

\begin{definition}\label{14}
We define an arrangement-map $\rho : \mathcal S \longrightarrow \mathcal G$ by $\rho(\ell)=G_{\ell}$. 
\end{definition}

We study the map $\rho$ in detail. 
Note that each element of $G_{\ell}$ can be extended to 
a projective transformation. That is, we have a faithful representation $r : G_{\ell} \longrightarrow PGL(3, \mathbb C)$.

\begin{lemma}\label{15}
The map $\rho$ is injective.
\end{lemma}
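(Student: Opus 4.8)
The goal is to show that distinct Galois lines cannot give rise to the same Galois group inside $\mathrm{Aut}(C)$. Suppose to the contrary that $\ell$ and $\ell'$ are two distinct Galois lines with $G_\ell = G_{\ell'} =: G$ as subgroups of $\mathrm{Aut}(C)$. The plan is to extract strong geometric rigidity from the projective representation $r : G \to PGL(3,\mathbb{C})$ and show that the common group $G$ pins down the line uniquely.

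First I would use the faithful representation $r$ noted just before the statement: each $\sigma \in G$ extends to a projective transformation of $\mathbb{P}^3$. The key observation is that the center $\ell$ of a Galois projection must be invariant under every $\sigma \in G_\ell$, because $\sigma$ is an automorphism of $C$ over $\ell_0$ and hence commutes with the projection $\pi_\ell$; concretely, the fibers of $\pi_\ell|_C$ are the $G$-orbits, so $\sigma$ permutes the fibers and the projective extension of $\sigma$ must preserve the pencil of planes through $\ell$, forcing $\sigma(\ell)=\ell$. Thus every line in $\rho^{-1}(G)$ lies in the fixed locus of the $G$-action on the Grassmannian of lines; equivalently, each such line is a $G$-stable line in $\mathbb{P}^3$. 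The heart of the argument is therefore to show that $G$, acting via $r$, stabilizes at most one line that can serve as a Galois center.

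Next I would analyze the fixed-point structure of the projective transformations. For a $V_4$-line the three nontrivial involutions each act as projective involutions of $\mathbb{P}^3$; an involution in $PGL(3,\mathbb{C})$ has fixed locus a pair of complementary linear subspaces, and a line stabilized by all three involutions of $G \cong V_4$ is tightly constrained by the intersection pattern of these fixed loci. Since $C$ is nondegenerate and does not meet $\ell$, I would argue that the common invariant line compatible with all the required fixed loci is unique, using that the three involutions share a coherent eigenspace decomposition. For the $Z_4$ case the generator is an order-four projective transformation, whose invariant lines are again determined by its eigenspace structure, and one checks the Galois-center condition (the line avoids $C$ and the orbits are the fibers) singles out one line.

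The main obstacle I anticipate is ruling out the a priori possibility that a single abstract subgroup $G \subset \mathrm{Aut}(C)$ stabilizes several distinct lines, all of which meet the Galois-center requirements. The representation $r$ fixes the projective action, so the set of $G$-invariant lines is determined by linear algebra, but there could be a positive-dimensional family or several discrete invariant lines; I would need to invoke the condition that $\ell$ does not meet $C$ together with the fact that $\pi_\ell|_C$ has the orbits of $G$ as its fibers to eliminate all but one. The cleanest route is probably to show that two distinct Galois lines $\ell \ne \ell'$ with the same group would force the group generated by $G_\ell$ and $G_{\ell'}$, together with the geometry of the two projections, to exceed the order-four bound on $|G|$ or to contradict the nondegeneracy of $C$, thereby yielding the desired contradiction and establishing injectivity of $\rho$.
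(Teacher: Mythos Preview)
Your approach via the projective representation and $G$-invariant lines is different from the paper's, and as written it has a real gap. You correctly note that any Galois line with group $G$ must be $G$-stable, but you then face---and do not resolve---the possibility that $G$ stabilizes several lines in $\mathbb{P}^3$. It does: for $G\cong V_4$ acting linearly on $\mathbb{C}^4$ one typically has a common eigenbasis, hence four fixed points in $\mathbb{P}^3$ and six invariant lines, so eigenspace bookkeeping alone cannot single out $\ell$. Your proposed ``cleanest route''---that the group generated by $G_{\ell}$ and $G_{\ell'}$ would exceed order four---cannot work, since by hypothesis $G_{\ell}=G_{\ell'}=G$ and the group they generate is $G$ itself.

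The paper bypasses all of this with a direct incidence argument. Given $\ell_1\ne\ell_2$ with the same Galois group, take a general $Q\in C$ and form $H_{iQ}=\langle\ell_i,Q\rangle$. The $G$-orbit of $Q$ is simultaneously the fibre of $\pi_{\ell_1}|_C$ and of $\pi_{\ell_2}|_C$, so all four orbit points lie on the line $H_{1Q}\cap H_{2Q}$. If $\ell_1\cap\ell_2=\{P\}$, these lines all pass through $P$ and $C$ is forced into a plane; if $\ell_1\cap\ell_2=\emptyset$, the lines sweep out a scroll that meets each $H_{iQ}$ in a single line, again forcing $C$ to be degenerate. Either way one contradicts nondegeneracy.

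If you want to rescue the representation-theoretic idea, the missing observation is this: the $G$-invariant subspace of $\operatorname{H}^0(C,\mathcal{O}_C(1))$ is exactly two-dimensional (invariant sections descend to sections of $\mathcal{O}_{\mathbb{P}^1}(1)$ on $C/G\cong\mathbb{P}^1$, and $h^0(\mathbb{P}^1,\mathcal{O}(1))=2$), and the Galois line $\ell$ is precisely the common zero locus of this subspace. That determines $\ell$ from $G$ uniquely, which is essentially what the construction in the proof of Lemma~\ref{16} shows.
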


\begin{proof}
For two elements $\ell_i$ of $\mathcal S \ (i=1, 2)$, suppose $\rho({\ell_1})=\rho({\ell_2})$ and $\ell_1 \ne \ell_2$. 
Then, the following two cases take place:   
\begin{enumerate} 
\item[(i)] $\ell_1 \cap \ell_2$ consists of one point $P$. 
\item[(ii)] $\ell_1 \cap \ell_2 = \emptyset$. 
\end{enumerate}
In the case (i), for a general point $Q \in C$, put $H_{iQ}=\langle \ell_i, Q \rangle$ $(i=1, 2)$ : the plane spanned by $\ell_i$ and $Q$. 
Since $G_{\ell_1}=G_{\ell_2}$, we have $H_{1Q} \cap \ell_0=H_{2Q} \cap \ell_0=\{ R \}$, where $\ell_0$ is the line defined in Introduction.  
Further, since $\pi_{\ell_1}(H_{1Q} \cap C)=\pi_{\ell_2}(H_{2Q} \cap C)= R$, the set of four points 
$H_{1Q} \cap C$ is equal to that of $H_{2Q} \cap C$ and they lie on the line $H_{1Q} \cap H_{2Q}$, which passes through $P$. 
This implies $C$ is contained in the plane spanned by $\ell_0$ and $P$. 
Since $C$ is assumed to be non-degenerate, this is a contradiction. 
Next we treat the case (ii). 
Similarly, for a general point $Q \in C$, put $H_{iQ}=\langle \ell_i, \ Q \rangle$. 
Then, by the same argument as above, the four points $H_{1Q} \cap C$ and $H_{2Q} \cap C$ 
lie on the line $H_{1Q} \cap H_{2Q}$. Thus $C$ is contained in a rational normal scroll  
$\Sigma$. 
However, $H_{iQ} \cap \Sigma$ is a line, so that $\Sigma$ must be a plane. This is a contradiction. 
\end{proof}

We present a criterion when $G \subset {\rm Aut}(C)$ can be the image of an element of $\mathcal S$. See \cite[Theorem 2.2]{y3} for a similar one. 
Hereafter we use the notation ${P_{\alpha}}'=\phi(P_{\alpha})=(\phi\varphi)(\alpha) \in C$ for brevity. 

\begin{lemma}\label{16}
A subgroup $G=\{ \sigma_1, \ldots, \sigma_4 \} $ of Aut$(C)$ is an image of $\rho$ if and only if $G$ satisfies the following condition $(\diamondsuit)${\rm :}  
\begin{enumerate}
\item[$(\diamondsuit)$] For each point $Q \in C$ the divisor $\sum_{i=1}^4 \sigma_i(Q)$ is linearly equivalent to $4{P_0}'$ and $C/G$ is a rational curve.
\end{enumerate}
\end{lemma}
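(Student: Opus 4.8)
The plan is to prove the two implications separately, using throughout the identification of the complete linear system $|4{P_0}'|$ with the dual projective space. Since $C$ is embedded by $|4{P_0}'|$ (Lemma \ref{19}) and $\deg(4{P_0}')=4>2g-2=0$, Riemann--Roch gives $\dim|4{P_0}'|=3$; thus $|4{P_0}'|$ is exactly the dual $\mathbb P^{3*}$, effective divisors linearly equivalent to $4{P_0}'$ corresponding bijectively to hyperplanes via $E\mapsto C\cdot H$. I record this identification first, as both directions rely on it.

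For the ``only if'' direction I would argue as follows. Suppose $G=G_\ell$ for a Galois line $\ell$. Then $\pi_\ell|_C:C\longrightarrow\ell_0\cong\mathbb P^1$ is a Galois covering with group $G$, so $C/G\cong\ell_0$ is rational. For any $Q\in C$ put $H_Q=\langle\ell,Q\rangle$; the fibre of $\pi_\ell|_C$ through $Q$ is the intersection divisor $C\cdot H_Q$, and because the covering is Galois this fibre equals the orbit divisor $\sum_{i=1}^4\sigma_i(Q)$ (each orbit point occurring with multiplicity equal to its ramification index). Hence $\sum_{i=1}^4\sigma_i(Q)=C\cdot H_Q\sim 4{P_0}'$, which together with the rationality of $C/G$ is precisely condition $(\diamondsuit)$.

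For the converse, assume $(\diamondsuit)$. Since $C/G$ is rational, the quotient $\mu:C\longrightarrow C/G\cong\mathbb P^1$ is a degree-four morphism whose fibres are the orbit divisors $\sum_i\sigma_i(Q)$; these form a base-point-free pencil (a $g^1_4$) on $C$. By the first clause of $(\diamondsuit)$ every fibre is linearly equivalent to $4{P_0}'$, so this $g^1_4$ is a one-dimensional linear subsystem $\Lambda$ of the complete system $|4{P_0}'|$. Under the identification above, $\Lambda$ is a line in $\mathbb P^{3*}$, hence is exactly the pencil of hyperplanes containing a unique line $\ell\subset\mathbb P^3$; concretely $\ell=\bigcap_Q H_Q$, where $H_Q$ cuts out $\sum_i\sigma_i(Q)$. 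Base-point-freeness forces $\ell\cap C=\emptyset$, since a common point of $\ell$ and $C$ would be a base point of the $g^1_4$. Finally the projection from $\ell$ has fibres $C\cdot H_Q=\sum_i\sigma_i(Q)$, so $\pi_\ell|_C$ coincides with $\mu$ and is therefore Galois with group $G$; thus $\ell$ is a Galois line and $G=\rho(\ell)$.

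The dimension count and the orbit-divisor computation are routine. The crux is the converse: recognizing that $(\diamondsuit)$ forces the $g^1_4$ defined by $\mu$ to sit inside the \emph{complete} system $|4{P_0}'|$, so that the associated family of hyperplanes $\{H_Q\}$ is an honest (linear, one-dimensional) pencil rather than a higher-dimensional or non-linear family. Once that linearity is established, projective duality produces the center line $\ell$ automatically and base-point-freeness guarantees $\ell$ misses $C$; I expect verifying the pencil structure to be the main point requiring care.
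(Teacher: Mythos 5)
Your proposal is correct and follows essentially the same route as the paper: the forward direction identifies the fibres of $\pi_\ell|_C$ with the hyperplane sections $C\cdot H_Q\sim 4{P_0}'$, and the converse recovers $\ell$ as the axis of the pencil of planes cutting out the $G$-orbit divisors inside the complete system $|4{P_0}'|$. The paper phrases this pencil as the span of two $G$-invariant sections $\widetilde{s_1},\widetilde{s_2}$ pulled back from $C/G\cong\mathbb P^1$ and extended to a basis of $\operatorname{H}^0(C,\mathcal O_C(1))$, which is the same construction as your duality argument.
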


\begin{proof}
If $G=\rho(\ell)$, then clearly $C/G \cong \mathbb P^1$.  Take a plane $H$ satisfying that $H \supset \ell$ and $H \ni Q$. 
By definition the point $\sigma_i(Q)$ $(1 \le i \le 4)$ lies on $H$, hence the divisor is linearly equivalent to $4{P_0}'$. 
Conversely, for a point $Q \in C$, put $D=\sum_{i=1}^4 \sigma_i(Q)$. 
By assumption we have $D \sim 4{P_0}'$, hence $G$ acts on $\operatorname{H}^0(C, \ \mathcal O_{C}(1))$. 
Therefore each element of $G$ can be extended to a projective transformation. 
Letting $\pi : C \longrightarrow C/G \cong \mathbb P^1$, 
we take independent sections $s_0$ and $s_1$ of $\operatorname{H}^0(\mathbb P^1, \ \mathcal O_{\mathbb P^1}(1))$ 
and put $\widetilde{s_i}={\pi}^*(s_i) \ (i=1, 2)$. 
Then we have ${\sigma}^*(\widetilde{s_i})=\widetilde{s_i}$.  Taking a basis of $\operatorname{H}^0(C, \ \mathcal O_{C}(1))$  
containing  $\widetilde{s_1}$ and $\widetilde{s_2}$, we obtain a Galois line $\ell$ such that $\rho(\ell)=G$. 
\end{proof}

We study whether $\ell_1 \cap \ell_2 = \emptyset$ or $\ne \emptyset$ by observing $G_{\ell_1} \cap G_{\ell_2}$ in Aut$(C)$.

\begin{lemma}\label{11} 
Suppose $\ell_1$ and $\ell_2$ are distinct Galois lines. Then, the following two cases take place.  
\begin{enumerate}
\item[(1)] If $\ell_1 \cap \ell_2 = \emptyset$, then $G_{\ell_1} \cap G_{\ell_2} =
\{ \mathrm{id} \}$ in {\rm Aut}$(C)$. 
\item[(2)] If $\ell_1 \cap \ell_2$ is a point $P$, then it is a singular point of some quadratic surface containing $C$, 
Further, we have $G_{\ell_1} \cap G_{\ell_2} = \langle \sigma \rangle$, where $\sigma$ has order two and has a fixed point as an 
automorphism of $C$.
\end{enumerate}   
\end{lemma}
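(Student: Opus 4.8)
The plan is to extract both statements from a single object: a nontrivial common automorphism $\sigma\in G_{\ell_1}\cap G_{\ell_2}$, extended to a projective transformation $\tilde\sigma$ of $\mathbb P^3$, and to read off everything from its fixed locus. First I would set up the dictionary. Because $\sigma\in G_{\ell_i}$ it commutes with $\pi_{\ell_i}$, so $\tilde\sigma$ carries every plane of the pencil through $\ell_i$ to itself and in particular preserves $\ell_i$. Choosing homogeneous coordinates adapted to $\ell_i$, the requirement that $\tilde\sigma$ respect the ratio defining $\pi_{\ell_i}$ forces the two coordinates vanishing on $\ell_i$ to be scaled by a common factor; this pins down the matrix shape of $\tilde\sigma$. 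Imposing this for both lines at once is exactly what splits the argument into cases. Throughout I would use two facts: $C$ is a complete intersection of two quadrics, so a line meets $C$ in at most two points; and $\operatorname{Aut}(C)=C\rtimes\operatorname{Aut}(C,O)$, where an automorphism is fixed-point-free precisely when it is a nontrivial translation.

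For case (2), with $\ell_1\cap\ell_2=\{P\}$, I would take coordinates adapted to both lines through $P$. The two scaling conditions then force $\tilde\sigma$ to be an involution whose fixed locus is a plane together with the single point $P$. Since every plane meets the quartic $C$, the reflection $\tilde\sigma$ must have fixed points on $C$, so $\sigma$ is an order-two automorphism with a fixed point, as claimed. Moreover $P$ is the isolated fixed point, and projecting from $P$ identifies $Q$ with $\sigma(Q)$ for general $Q\in C$; hence $\pi_P$ is a $2$-to-$1$ map onto its image, and Remark \ref{23} together with Lemma \ref{13} identifies $P$ with one of the $Q_i$, the vertex of a quadric cone containing $C$. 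That the intersection equals exactly $\langle\sigma\rangle$ and is no larger should follow from Lemma \ref{15}: a common subgroup of order four would give $G_{\ell_1}=G_{\ell_2}$, forcing $\ell_1=\ell_2$ by injectivity of $\rho$.

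For case (1), with $\ell_1\cap\ell_2=\emptyset$, the coordinates $\ell_1=\{x_0=x_1=0\}$ and $\ell_2=\{x_2=x_3=0\}$ reduce the two scaling conditions to $\tilde\sigma=\operatorname{diag}(\lambda,\lambda,\mu,\mu)$. For $\sigma\neq\mathrm{id}$ this fixes $\ell_1$ and $\ell_2$ pointwise and nothing else, so $\operatorname{Fix}(\tilde\sigma)\cap C=\emptyset$; thus $\sigma|_C$ is fixed-point-free, i.e. a translation, and since the eigenvalue pattern forces order two it would be a $2$-torsion translation. The heart of the matter—and the step I expect to be the main obstacle—is to exclude such a common element. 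The natural attempt is the transversal argument: for general $Q$ the point $\sigma(Q)$ lies in $\langle\ell_1,Q\rangle\cap\langle\ell_2,Q\rangle$, the unique transversal $m_Q$ through $Q$ meeting both skew lines, so $\sigma$ merely interchanges the two points of $m_Q\cap C$. This is entirely consistent with an order-two translation and does not by itself yield a contradiction, so proving triviality in (1) is the delicate point. I would therefore scrutinize this case most carefully, testing the would-be common element against the translation $\sigma_i\sigma_j$ produced by a pair of opposite edges of the tetrahedron of Theorem \ref{6}, and using the criterion $(\diamondsuit)$ of Lemma \ref{16} to control which order-two translations can simultaneously occur in two distinct Galois groups.
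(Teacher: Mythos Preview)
For case~(2) your matrix approach and the paper's geometric argument run in parallel. The paper observes directly that for each $Q\in C$ the line $\ell_Q=\langle\ell_1,Q\rangle\cap\langle\ell_2,Q\rangle$ passes through $P$ and carries every $\sigma(Q)$ with $\sigma\in G_{\ell_1}\cap G_{\ell_2}$, so $C$ lies on a quadric cone with vertex $P$; the covering involution of $\pi_P|_C$ is then the desired $\sigma$, and $C/\langle\sigma\rangle\cong\mathbb P^1$ supplies the fixed point. Your route via the eigenvalue pattern reaches the same end. One small correction: the two scaling conditions alone only give eigenvalues $(\nu,\lambda,\lambda,\lambda)$, not $\nu/\lambda=-1$; that $\sigma$ has order two comes from $|G_{\ell_i}|=4$ together with Lemma~\ref{15} (order four would force $G_{\ell_1}=G_{\ell_2}$), which you invoke separately.

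Your hesitation in case~(1) is fully justified and in fact exposes an error in the paper. The paper's proof shows (correctly) that any common $\sigma$ fixes both skew lines pointwise, and then asserts that this forces $\tilde\sigma=\mathrm{id}$; but $\operatorname{diag}(\lambda,\lambda,\mu,\mu)$ with $\lambda\neq\mu$ already refutes that implication. Worse, statement~(1) itself is false. Take the opposite edges $\ell_{Q_0Q_1}$ and $\ell_{Q_2Q_3}$ of the tetrahedron of Theorem~\ref{6}: they are skew since the four $Q_i$ are non-coplanar (Lemma~\ref{13}), yet
\[
\sigma_0\sigma_1(z)=z-\tfrac12=\sigma_2\sigma_3(z),
\]
so the $2$-torsion translation $z\mapsto z+\tfrac12$ lies in $G_{01}\cap G_{23}$. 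Your transversal picture is therefore the correct one: for skew Galois lines a nontrivial common element is always a $2$-torsion translation, and such elements genuinely occur. No argument can rescue~(1) as stated. Fortunately the paper's subsequent uses of this lemma (Remark~\ref{20} and the claims in the proof of Theorem~\ref{8}) rely only on case~(2), so the error does not propagate.
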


\begin{proof}
Take an element $\sigma \in G_{\ell_1} \cap G_{\ell_2}$. It can be extended to a projective transformation. 
Since every plane $H_i$ containing $\ell_i$ is invariant by $\sigma$, we infer  $\sigma(\ell_i)=\ell_i$ (i=1, 2). Therefore, for each hyperplane $H_1 \supset \ell_1$, if $H_1 \cap \ell_2=
\{ Q \}$, then $\sigma(Q)=Q$, i.e., ${\sigma}|_{\ell_2}=$ id. 
By the same argument we also have $\sigma|_{\ell_1}=$ id. 
Since $\ell_1 \cap \ell_2 = \emptyset$, $\sigma$ is identity on $\mathbb P^3$. 
Next we treat the second case. 
Suppose $\ell_1 \cap \ell_2$ consists of one point $P$. 
Then, for each point $Q \in C$, put $H_{iQ}=\langle \ell_i, Q \rangle$ and $\ell_Q=H_{1Q} \cap H_{2Q}$. Since $H_{iQ} \supset \ell_{Q}$ for $i=1$ and $2$, 
we have $\sigma(Q) \in \ell_Q$. 
Therefore $C$ is contained in the cone passing through $P$. 
Clearly the order of $\sigma$ is two. Since the quotient curve $C/\langle \sigma \rangle$ is isomorphic to 
$\ell_0$, the $\sigma$ has a fixed point in $C$.   
\end{proof}

From Lemma \ref{11} we infer the following remark.

\begin{remark}\label{20}
Let $\ell$ be a  Galois line and take a point $P \in \ell$. 
Let $\pi_P : \mathbb P^3 \dasharrow \mathbb P^2$ be a projection with center $P$. 
If $P$ is not the vertex of the tetrahedron, then $\pi_P(\ell \setminus \{ P \})$ is a Galois point for 
the quartic curve $\pi_P(C)$. However, if $P$ is the one, then 
$\pi_P|_C$ turns out to be a $2$ to $1$ morphism onto its image and $\pi_P(C)$ is a conic in $\mathbb P^2$.   
\end{remark}

Hereafter we denote by 
$\sigma_i$ ($0 \le i \le 3$) an automorphism of $E$ such that the representation on 
$\mathbb C$ is 
\[
\sigma_0 (z)=-z, \ \ \sigma_1 (z)=-z+\frac{1}{2}, \ \ \sigma_2(z)=-z+\frac{\omega}{2}, \ \ \sigma_3(z)=-z+\frac{1+\omega}{2}. 
\]

\begin{lemma}\label{12}
The number of $V_4$-lines is at most six. 
\end{lemma}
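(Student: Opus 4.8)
The plan is to count, in place of the lines themselves, the subgroups $G \cong V_4$ of $\mathrm{Aut}(C)$ that can arise as Galois groups. By Lemma \ref{15} the arrangement-map $\rho$ is injective, so the number of $V_4$-lines equals the number of subgroups $G \cong V_4$ lying in the image of $\rho$, and by Lemma \ref{16} these are precisely the $V_4$-subgroups satisfying condition $(\diamondsuit)$. Hence it suffices to show that at most six subgroups $G \cong V_4$ of $\mathrm{Aut}(C)$ satisfy $(\diamondsuit)$.

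First I would describe the relevant automorphisms concretely. Since $C$ is embedded by $|4P_0|$, every automorphism of $C$ is induced by an automorphism $z \mapsto \epsilon z + t$ of $E$ preserving $|4P_0|$; applying Lemma \ref{10} one checks that this forces $t$ to be a $4$-torsion point and $\epsilon$ a unit of $\mathcal L$. For a subgroup $G \cong V_4$ I would write its non-identity elements as $g_k(z) = \epsilon_k z + t_k$ for $k = 1, 2, 3$. Because $V_4$ has exponent two and $g \mapsto \epsilon_g$ is a homomorphism, every $\epsilon_k$ lies in $\{ \pm 1 \}$.

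Next I would translate $(\diamondsuit)$ into arithmetic. Writing $Q = {P_u}'$ and applying Lemma \ref{10} to the four points $Q, g_1(Q), g_2(Q), g_3(Q)$, the divisor $Q + \sum_{k} g_k(Q)$ is linearly equivalent to ${P_{cu+s}}' + 3{P_0}'$, where $c = 1 + \epsilon_1 + \epsilon_2 + \epsilon_3$ and $s = t_1 + t_2 + t_3$. Requiring this to equal $4{P_0}'$ for every $Q$ forces $cu + s \equiv 0 \pmod{\mathcal L}$ for all $u$, hence $c = 0$ and $s \equiv 0$. Since each $\epsilon_k = \pm 1$, the condition $c = 0$ means exactly one $g_k$ has $\epsilon_k = 1$ (a translation) while the other two have $\epsilon_k = -1$. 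Then I would pin down the translation parts: the unique translation, being an order-two element of $V_4$, is a translation by a nonzero $2$-torsion point, and if $g_2, g_3$ denote the remaining elements $-z + t_2, -z + t_3$, then $g_2 g_3$ is the translation by $t_2 + t_3$, so the relation $s \equiv 0$ together with the group law forces $t_2, t_3$ (and hence $t_1$) to be $2$-torsion. Therefore $g_2, g_3 \in \{ \sigma_0, \sigma_1, \sigma_2, \sigma_3 \}$, and $G$ is completely determined by the unordered pair $\{ g_2, g_3 \}$; there are $\binom{4}{2} = 6$ such pairs, giving at most six $V_4$-subgroups satisfying $(\diamondsuit)$, which is the asserted bound.

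The step I expect to be the main obstacle is the passage from the geometric condition on $\ell$ to the clean numerical identities $c = 0$ and $s \equiv 0$: one must argue carefully that the coefficient of $u$ vanishes \emph{identically}, using that $Q$ ranges over all of $C$, before the torsion bookkeeping can force the translation parts into the $2$-torsion. Once that reduction is secured, the remaining counting is purely combinatorial.
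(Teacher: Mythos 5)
Your overall strategy is the same as the paper's: reduce to counting the $V_4$-subgroups of $\mathrm{Aut}(C)$ that can occur as Galois groups, show that each such subgroup must be generated by two of the four involutions $\sigma_0,\ldots,\sigma_3$ (equivalently, that all translation parts are $2$-torsion), and conclude via the injectivity of $\rho$ (Lemma \ref{15}). The paper runs the divisor computation on the plane section $H\cdot C$ through ${P_0}'$, which is exactly condition $(\diamondsuit)$ evaluated at $Q={P_0}'$, while you evaluate it at a general point ${P_u}'$ and extract the two identities $c=0$ and $s\equiv 0$; your version is slightly more systematic (it also disposes of the all-translation $V_4$, which the paper excludes only implicitly by positing $\sigma(z)=-z+\alpha$), but it is not a genuinely different route.

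There is, however, one computational slip at the decisive step. With $g_2(z)=-z+t_2$ and $g_3(z)=-z+t_3$ the composite is $(g_2g_3)(z)=z+t_2-t_3$, a translation by $t_2-t_3$, not by $t_2+t_3$. This matters: with your formula $t_1=t_2+t_3$ the relation $s=t_1+t_2+t_3\equiv 0$ collapses to $2t_1\equiv 0$, which is automatic (since $g_1$ has order two) and yields no constraint on $t_2$, so the claim that $t_2,t_3$ are $2$-torsion does not follow as written. With the correct formula $t_1=t_2-t_3$ one gets $s=2t_2\equiv 0$, hence $t_2$ (and then $t_3=t_2-t_1$) is $2$-torsion, and the count of $\binom{4}{2}=6$ unordered pairs goes through. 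The point is genuinely load-bearing: there are $V_4$-subgroups such as $\{\mathrm{id},\ z+1/2,\ -z+1/4,\ -z+3/4\}$ with non-$2$-torsion $t_2$, and the condition $s\equiv 0$ is precisely what excludes them. The repair is a one-line sign correction, after which your proof is complete and agrees with the paper's.
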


\begin{proof}
Suppose $C$ has a $V_4$-line $\ell$. Then, let $H$ be a plane containing $\ell$ and 
${P_0}'$. 
Since $\pi_{\ell}|_C : C \longrightarrow \mathbb P^1$ is a $V_4$-covering, the intersection divisor $H \cdot C$ on $C$ can be expressed in one of the following two types: 
\begin{enumerate}
\item[(i)] $H \cdot C = 2{P_0}' + 2{P_{\gamma}}'$ 
\item[(ii)] $H \cdot C = {P_0}' +{P_{\gamma_1}}'+{P_{\gamma_2}}'+{P_{\gamma_3}}'$. 
\end{enumerate}
Suppose $G=\langle \sigma, \tau \rangle$, where 

\begin{equation}
\sigma(z)=-z+\alpha \ \mathrm{and} \ \tau(z)=z+\beta 
\label{eqn:a}
\end{equation}
on the universal covering $\mathbb C$, where $2\beta \equiv 0 \pmod{\mathcal L}$ and 
$\beta \not\equiv 0 \pmod{\mathcal L}$. 
The case (i) (resp. (ii)) occurs when $\alpha \equiv 0 \pmod{\mathcal L}$ (resp. $\alpha \not\equiv 0 \pmod{\mathcal L}$) in (\ref{eqn:a}). 
We consider the possibility of $\alpha \not\equiv 0$, i.e., we treat the case (ii). 
Since $H \cdot C$ is invariant by the action of $G$, 
it can be expressed as ${P_0}'+{P_{\alpha}}'+{P_{\beta}}'+{P_{\alpha + \beta}}'$. 
Since this is linearly equivalent to $4{P_0}'$, we infer 

\begin{equation}
P_{\alpha}+P_{\beta}+P_{\alpha + \beta} \sim 3P_0 
\label{2}
\end{equation} 
on $E$. 
The left hand side of (\ref{2}) is linearly equivalent to 
$P_{2(\alpha + \beta)}+2P_0$ by Lemma \ref{10}. 
Therefore we have $P_{2(\alpha + \beta)} \sim P_0$. 
This implies $2(\alpha + \beta) \equiv 0 \pmod{\mathcal L}$, i.e., 
$2\alpha \equiv 0 \pmod{\mathcal L}$. 
Then, let us find the distinct subgroups $G$ of Aut$(C)$ such that $G$ is generated by 
order two elements. 
By taking two from $\sigma_i (0 \le i \le 3)$, we have six subgroups $G_{ij}=\langle \sigma_i, \ \sigma_j \rangle$, where 
$0 \le i < j \le 3$. Clearly $G_{ij} \cong V_4$.  
For example, $G_{12}=\{ \mathrm{id}, \ \sigma_1, \ \sigma_2,\ \sigma_1\sigma_2 \}$, where $(\sigma_1\sigma_2)(z)=z+(1+\omega)/2$. 
\end{proof}

\begin{lemma}\label{24}
Putiing $a_i=(e_i-e_j)(e_i-e_k)$, we have 
\[
{\sigma_0}^*(x)=x, \ \ {\sigma_0}^*(y)=-y
\]
and 
\[
{\sigma_i}^*(x)=\frac{a_i}{x-e_i} + e_i, \ \ {\sigma_i}^*(y)=\frac{a_i}{(x-e_i)^2}y, \ where \ 1 \le i \le 3. 
\]
\end{lemma}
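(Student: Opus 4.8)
The plan is to compute the pullback action of each $\sigma_i$ on the coordinate functions $x = \wp(z)$ and $y = \wp'(z)$ of $E$, where the $\sigma_i$ act on the universal cover $\mathbb{C}$ by $\sigma_0(z) = -z$ and $\sigma_i(z) = -z + \omega_i$ with $\omega_1 = 1/2$, $\omega_2 = \omega/2$, $\omega_3 = (1+\omega)/2$ the three nonzero half-periods. The identity ${\sigma_0}^*(x) = x$, ${\sigma_0}^*(y) = -y$ is immediate from the parity of the Weierstrass functions, since $\wp$ is even and $\wp'$ is odd. So the real content is the formula for $1 \le i \le 3$, which I would obtain from the classical addition/translation formulas for $\wp$ evaluated at the half-periods.

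First I would record that $\sigma_i^*(x)(z) = \wp(-z + \omega_i) = \wp(z - \omega_i)$ (using that $\wp$ is even), and similarly $\sigma_i^*(y)(z) = \wp'(-z+\omega_i) = -\wp'(z - \omega_i)$. The half-period $\omega_i$ satisfies $\wp(\omega_i) = e_i$ and $\wp'(\omega_i) = 0$, which is the crucial input. I would then apply the addition theorem for $\wp$ in the degenerate form appropriate to a half-period shift. The cleanest route uses the known identity
\[
\wp(z - \omega_i) = e_i + \frac{(e_i - e_j)(e_i - e_k)}{\wp(z) - e_i},
\]
where $\{i,j,k\} = \{1,2,3\}$; this is the standard two-term expression for $\wp$ translated by a half-period. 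Substituting $x = \wp(z)$ and $a_i = (e_i - e_j)(e_i - e_k)$ gives ${\sigma_i}^*(x) = a_i/(x - e_i) + e_i$ directly.

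For the $y$-formula I would differentiate the displayed $x$-identity with respect to $z$. Writing $u = \wp(z)$ so $u' = \wp'(z) = y$, differentiating $\wp(z - \omega_i) = e_i + a_i/(u - e_i)$ yields $\wp'(z - \omega_i) = -a_i u'/(u - e_i)^2 = -a_i y/(x - e_i)^2$. Since $\sigma_i^*(y) = -\wp'(z - \omega_i)$, the two sign flips cancel and I arrive at ${\sigma_i}^*(y) = a_i y/(x - e_i)^2$, exactly as claimed. This gives all four cases.

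The main obstacle is establishing the half-period translation identity for $\wp$ with the correct constant $a_i = (e_i-e_j)(e_i-e_k)$, rather than the differentiation, which is purely mechanical once the $x$-formula is in hand. I expect to justify the identity either by citing the standard addition formula and specializing (using $\wp(\omega_i) = e_i$, $\wp'(\omega_i) = 0$), or by the elliptic-function-theoretic argument that $\wp(z - \omega_i) - e_i$ and $a_i/(\wp(z) - e_i)$ are both elliptic functions of order two with the same double pole at $z \equiv \omega_i$ and the same zero, hence proportional, with the proportionality constant pinned down by comparing leading Laurent coefficients near $z = \omega_i$ together with the relation $\wp''(\omega_i) = 2(e_i - e_j)(e_i - e_k)$. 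Either way the constant $a_i$ emerges from the local behavior of $\wp$ at the half-period.
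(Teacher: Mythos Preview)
Your proof is correct and takes essentially the same approach as the paper: both compute $\sigma_i^*(x)$ and $\sigma_i^*(y)$ by specializing the addition formula for $\wp$ at the half-periods $\omega_i$ (where $\wp(\omega_i)=e_i$ and $\wp'(\omega_i)=0$). Your shortcut of obtaining the $y$-formula by differentiating the half-period identity for $\wp$, rather than invoking the bulkier $\wp'$ addition formula as the paper does, is a clean simplification but not a different method.
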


\begin{proof}
Since $x=\wp(z)$ and $y=\wp'(z)$, we can prove them by using the the addition formulas of $\wp$ and $\wp'$:   
\[
\begin{array}{ccl}
\wp(z_1+z_2) & = & -\wp(z_1)-\wp(z_2)+\frac{1}{4} \left( \frac{\wp'(z_1)-\wp'(z_2)}{\wp(z_1)-\wp(z_2)} \right)^2 \ \mathrm{and} \\
\wp'(z_1+z_2) & =& 
\frac{-1}{\wp(z_1)-\wp(z_2)} \left[ \wp'(z_1) \left \{ (-\wp(z_1)-2\wp(z_2))+ \frac{1}{4} \left(\frac{\wp'(z_1)-\wp'(z_2)}{\wp(z_1)-\wp(z_2)}  \right)^2   \right \} \right. \\ 
 & & \left. +\wp'(z_2)\left \{ (2\wp(z_1)+\wp(z_2)-\frac{1}{4} \left (\frac{\wp'(z_1)-\wp'(z_2)}{\wp(z_1)-\wp(z_2)} \right )^2 \right \} \right]
\end{array}
\]
\end{proof}

Since $\mathcal L(4P_0)=\langle 1, \ x^2,\  x, \ y \rangle$, we can assume the curve 
$C$ is given by the embedding $\phi(x,y)=(1:x^2:x:y)$. Let $(X:Y:Z:W)$ be a set of homogeneous coordinates on $\mathbb P^3$. Then  
the ideal of $C$ is generated by 
\[
F_1=XY-Z^2 \ \mathrm{and} \ F_2=4YZ+pXZ+qX^2-W^2. 
\]

\begin{lemma}\label{25}
Using the  same notation $G_{ij}=\langle \sigma_i, \ \sigma_j \rangle$ as in the proof of Lemma \ref{12}, we denote by 
$K_{ij}=k(x,y)^{G_{ij}}$ the fixed subfield of $k(x,y)$ by $G_{ij}$. 
Then we have 
\[
K_{0i}=k \left( \frac{x^2+c_i}{x-c_i} \right),\ where \ 1 \le i \le 3
\]
and 
\[
K_{ij}=k \left( \frac{y}{c_k+2e_kx-x^2} \right),\ where \ 1 \le i < j \le 3 \ and \ (k-i)(k-j) \ne 0. 
\]
In particular, the Galois lines which correspond to $G_{0i}$ and $G_{ij}$ by the arrangement-map $\rho$ are 
\[
Y+c_iX=Z-e_iX=0 \ \ and \ \ c_kX-Y+2e_kZ=W=0
\]
respectively.
\end{lemma}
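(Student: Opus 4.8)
The plan is to compute each fixed field directly from the explicit generator actions in Lemma \ref{24}, exhibit a manifestly invariant rational function, and then close by a degree count; the identification of the lines follows formally afterwards. I would start with $K_{0i}=k(x,y)^{G_{0i}}$. Since $\sigma_0^*(x)=x$, one has $k(x,y)^{\langle\sigma_0\rangle}=k(x)$, so $K_{0i}$ is the subfield of $k(x)$ fixed by the induced involution $\sigma_i\colon x\mapsto a_i/(x-e_i)+e_i$. As generator I would propose the restriction to $C$ of $(Y+c_iX)/(Z-e_iX)$, i.e. $u=(x^2+c_i)/(x-e_i)$, and verify $\sigma_i^*(u)=u$ by substituting the M\"obius formula for $\sigma_i^*(x)$. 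The computation collapses thanks to the identity $a_i=e_i^2+c_i$, a consequence of $e_1+e_2+e_3=0$: after clearing denominators the transformed numerator factors as $a_i(x^2+c_i)$ and the transformed denominator as $a_i/(x-e_i)$, so $a_i$ cancels and $u$ is returned. Since $u$ is $\sigma_i$-invariant we have $k(u)\subseteq K_{0i}$, and as $x$ satisfies $x^2-ux+(c_i+ue_i)=0$ over $k(u)$ we get $[k(x):k(u)]=2=|\langle\sigma_i\rangle|$, forcing $K_{0i}=k(u)$.

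For $K_{ij}$ with $1\le i<j\le 3$ I would first note that $\sigma_i\sigma_j$ is the translation $\tau_k$ by the half-period indexed by the remaining value $k$, so $G_{ij}=\langle\sigma_i,\tau_k\rangle$. As generator I would take the restriction of $W/(c_kX-Y+2e_kZ)$, namely $v=y/(c_k+2e_kx-x^2)$, and check invariance under $\sigma_i$ and $\tau_k$ via Lemma \ref{24}. The pleasant feature is that each of $\sigma_i^*$ and $\tau_k^*$ multiplies $y$ by a factor $\pm a_\bullet/(x-e_\bullet)^2$ and sends the quadratic $c_k+2e_kx-x^2$ to $\pm a_\bullet(c_k+2e_kx-x^2)/(x-e_\bullet)^2$ with the same sign, so the prefactors cancel in the quotient and $v$ is fixed. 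For the degree I would use the relation $v^2(c_k+2e_kx-x^2)^2=4(x-e_1)(x-e_2)(x-e_3)$, which shows $x$ is algebraic of degree at most four over $k(v)$; since $y=v(c_k+2e_kx-x^2)\in k(x,v)$ this gives $[k(x,y):k(v)]\le 4$, and together with $k(v)\subseteq K_{ij}$ and $[k(x,y):K_{ij}]=4$ it forces $K_{ij}=k(v)$.

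The final clause about the explicit lines is then immediate. For a line $\ell=V(L_1)\cap V(L_2)$ the projection $\pi_\ell$ is the pencil $(L_1:L_2)$, so $k(\ell_0)$ is generated inside $k(C)$ by $(L_1/L_2)|_C$, and by the injectivity of $\rho$ (Lemma \ref{15}) the line is determined by its group. Reading the two generators above as ratios of linear forms on $C$ via $(X:Y:Z:W)=(1:x^2:x:y)$ identifies the $G_{0i}$-line with $V(Y+c_iX)\cap V(Z-e_iX)$ and the $G_{ij}$-line with $V(c_kX-Y+2e_kZ)\cap V(W)$, exactly as stated.

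I expect the main obstacle to be the invariance verifications, and above all the cross-index computation of $\sigma_i^*$ applied to the $K_{ij}$-denominator $c_k+2e_kx-x^2$, where the indices $i$ and $k$ are mixed: showing that the transformed numerator again factors as $a_i(c_k+2e_kx-x^2)$ requires several symmetric-function manipulations resting on $e_1+e_2+e_3=0$, such as $e_k+2e_i=e_i-e_j$ and $(e_i+e_k)^2=e_j^2$. Once these identities are secured, the remaining steps — the degree counts and the passage from fixed fields to lines — are routine.
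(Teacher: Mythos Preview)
Your proposal is correct and follows exactly the route the paper intends: the paper's own proof reads in full ``By making use of Lemma~\ref{24}, we can check the assertions by direct calculations,'' and what you have written is precisely that calculation carried out, together with the degree count and the passage from fixed fields to lines. Incidentally, your generator $(x^2+c_i)/(x-e_i)$ is the one consistent with the stated line $Y+c_iX=Z-e_iX=0$ under the embedding $(1:x^2:x:y)$; the denominator $x-c_i$ printed in the statement appears to be a typo for $x-e_i$, and your verification (resting on $a_i=e_i^2+c_i$) confirms this.
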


\begin{proof}
By making use of Lemma \ref{24}, we can check the assertions by direct calculations. 
\end{proof}

Now we proceed with the proof of Lemma \ref{13}. 
Let $S=V(F)$ be a surface containing $C$. 
Then $F$ can be expressed as $\lambda_1F_1+\lambda_2F_2$, where $(\lambda_1 : \lambda_2) \in \mathbb P^1$. 
In case $\lambda_2=0$, the point $Q_0=(0:0:0:1)$ is the singular point of $V(F_1)$. 
On the other hand, in case $\lambda_2 \ne 0$, put $b=\lambda_1/\lambda_2$. 
So we assume  $F=bF_1+F_2$. Consider the condition that $V(F)$ has a singular point, i.e., consider the simultaneous linear equations 

\begin{equation}
F_X = F_Y=F_Z=F_W=0. 
 \label{eqn:1}
\end{equation}

This is equivalent to consider the rank of the matrix 
\begin{equation}
M_b = \left( 
\begin{array}{cccc}
2q & b & p & 0 \\
b & 0 & 4 & 0 \\
p & 4 & -2b & 0 \\
0 & 0 & 0 & -2
\end{array}
\right).
\end{equation}
The equations (\ref{eqn:1}) have a non-trivial solution if and only if 
\begin{equation}
  b^3+4pb-16q=0. 
  \label{eqn:2}
\end{equation}
It easy to see that the left hand side of (\ref{eqn:2})~ can be factored into \\ $(b+4e_1)(b+4e_2)(b+4e_3)$. 
Thus, there exist three distinct solutions of (\ref{eqn:1}).
Since the rank of $M_b$ is three for each solution of (\ref{eqn:1}), each surface $S_i=V(b_iF_1+F_2)$ is irreducible, where $b_i=-4e_i$. 
Let $Q_{i}$ be the unique singular point of 
$S_i$. By simple calculations we obtain $Q_{i}=(8:-2p-{b_i}^2:-2b_i:0)=(1:-c_i:e_i:0)$, where  $c_i={e_i}^2+e_je_k$ such that $\{ i, j, k \}=\{1, 2, 3 \}$. 
Since 
\[
\det \left( 
\begin{array}{ccc}
1 & -c_1 & e_1 \\
1 & -c_2 & e_2 \\
1 & -c_3 & e_3
\end{array}
\right)
=2(e_1-e_2)(e_2-e_3)(e_3-e_1) \ne 0, 
\] 
the four points are not coplanar. 
This completes the proof. 
\bigskip

The proof of Remark \ref{23} is as follows. 
Let $\Sigma_Q$ be the set $\{ \ \ell_{QR} \ | \ R \in C \  \}$. 
Then there exists a cone $S_Q$ with the singularity at $Q$ such that $S_Q \supset C$ and $S_Q \supset \Sigma$. 
Therefore, by Lemma \ref{13}, we have $Q=Q_i$ for some $i$.  

\bigskip

Combining Lemmas \ref{5}, \ref{13} and \ref{12}, we infer readily Theorem \ref{6}.  

\begin{remark}\label{21}
By using the condition $(\diamondsuit)$ in Lemma \ref{16}, we can prove that the number of $V_4$-lines is 
just six. However, Lemmas \ref{5} and \ref{13} give the more detailed structure of the arrangement of $V_4$-lines. 
\end{remark}

Now we go to the proof of Theorem \ref{8}. Since $j(C)=1$, we can assume $\omega=\sqrt{-1}$.  
Hereafter, for simplicity we use $i$ instead of $\sqrt{-1}$, so $\mathcal L=\mathbb Z+\mathbb Zi$. 

\begin{lemma}\label{17}
The number of $Z_4$-lines is at most eight. 
\end{lemma}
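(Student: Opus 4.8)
The plan is to reduce the count to a purely group-theoretic problem via the injectivity of $\rho$ (Lemma \ref{15}) and the criterion $(\diamondsuit)$ (Lemma \ref{16}): a $Z_4$-line exists precisely when there is a cyclic subgroup $G=\langle \sigma \rangle \subset \mathrm{Aut}(C)$ of order four satisfying $(\diamondsuit)$, and distinct lines give distinct subgroups. So it suffices to bound by eight the number of such subgroups. Since $j(C)=1$ and $\omega=i$, every automorphism is represented on $\mathbb{C}$ by $z \mapsto \zeta z + \beta$ with $\zeta \in \{1, i, -1, -i\}$ and $\beta \in \mathbb{C}/\mathcal{L}$, where $\zeta=-1$ gives order two and $\zeta=1$ gives a translation.

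Next I would extract the constraint on $\sigma(z)=\zeta z + \beta$ of order four from $(\diamondsuit)$. For $Q={P_{t}}'$ the four images have parameters $t,\ \zeta t+\beta,\ \ldots$, and by repeated use of Lemma \ref{10} the divisor $\sum_{k=0}^{3}\sigma^{k}(Q)$ is linearly equivalent to ${P_{s(t)}}' + 3{P_0}'$, where $s(t)=(1+\zeta+\zeta^{2}+\zeta^{3})t + (\ldots)\beta$. Requiring $\sum_{k=0}^{3}\sigma^{k}(Q)\sim 4{P_0}'$ for every $Q$ forces the $t$-coefficient $1+\zeta+\zeta^{2}+\zeta^{3}$ to vanish; together with $\sigma$ having order four this gives $\zeta=\pm i$. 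In particular a pure translation by a $4$-torsion point is excluded, consistently with the fact that its quotient is again elliptic rather than rational. Within each admissible subgroup exactly one generator has linear part $i$, so I may normalize $\zeta=i$; the residual condition then collapses to $(2+2i)\beta \equiv 0 \pmod{\mathcal{L}}$.

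I would then count the solutions $\beta$. The constraint $(2+2i)\beta\equiv 0$ describes the kernel of multiplication by $2+2i$ on $\mathbb{C}/\mathcal{L}$, which, using $\mathcal{L}=\mathbb{Z}+\mathbb{Z}i$, has exactly $N(2+2i)=8$ elements; equivalently one solves $(2+2i)\beta \in \mathbb{Z}+\mathbb{Z}i$ directly and finds eight classes. Since normalizing to $\zeta=i$ sets up a bijection between these $\beta$ and the $Z_4$-subgroups meeting $(\diamondsuit)$, there are at most eight such subgroups and hence at most eight $Z_4$-lines. One checks the rationality clause of $(\diamondsuit)$ holds automatically, since an order-four automorphism with $\zeta=i$ fixes a point and so $C/G\cong \mathbb{P}^{1}$; this will be relevant for the companion claim that all eight are in fact realized.

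The main obstacle is twofold bookkeeping. First, I must verify that the $t$-dependence in the divisor sum cancels exactly when $\zeta=\pm i$ and in no other case, so that the order-four generators relevant to $(\diamondsuit)$ are precisely those of linear part $\pm i$. Second, I must confirm that normalizing to $\zeta=i$ yields a genuine one-to-one correspondence with the subgroups: no two distinct admissible $\beta$ produce the same $\langle \sigma \rangle$, and every admissible subgroup is counted once. Only then does the kernel count of size eight transfer to the line count without over- or under-counting.
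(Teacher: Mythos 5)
Your proposal is correct and follows essentially the same route as the paper: both reduce to counting cyclic subgroups $\langle\sigma\rangle$ with $\sigma(z)=iz+\alpha$ whose orbit divisor is linearly equivalent to $4{P_0}'$, derive the condition $2(1+i)\alpha\equiv 0\pmod{\mathcal L}$ via Lemma \ref{10}, and count eight solutions (you via the norm $N(2+2i)=8$, the paper by explicit enumeration of $(m+ni)/4$). Your explicit exclusion of order-four translations and the check that each subgroup has a unique generator with linear part $i$ are points the paper leaves implicit, but the argument is the same.
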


\begin{proof}
Suppose $C$ has a $Z_4$-line $\ell$. Then, let $H$ be a plane containing $\ell$ and 
${P_0}'$. 
Since $\pi_{\ell}|_C : C \longrightarrow \mathbb P^1$ is a $Z_4$-covering, one of the following three cases take place: 
\begin{enumerate}
\item[(i)]$H \cdot C=4{P_0}'$. 
\item[(ii)] $H \cdot C = 2{P_0}' + 2{P_{\gamma}}'$ 
\item[(iii)] $H \cdot C = {P_0}' +{P_{\gamma_1}}'+{P_{\gamma_2}}'+{P_{\gamma_3}}'$. 
\end{enumerate}
Suppose $G=\langle \sigma \rangle$, where 
\begin{equation}
\sigma(z)=i z+\alpha  
\label{eqn:6}
\end{equation}
 on the universal covering $\mathbb C$. 
The case (i) occurs if and only if ${P_0}'$ is a fixed point for $\sigma$, i.e., $\alpha \equiv 0 \pmod{\mathcal L}$ in (\ref{eqn:6}). 
The case (ii) occurs if and only if ${P_0}'$ is a fixed point for ${\sigma}^2$, i.e., $2\alpha \equiv 0 \pmod{\mathcal L}$ in (\ref{eqn:6}).  
Concerning the last case (iii), since $H \cdot C$ is invariant by the action of $G$, it  
can be expressed as ${P_0}'+{P_{\alpha}}'+{P_{i \alpha}}'+{P_{(1+i)\alpha}}'$. 
Since this is linearly equivalent to $4{P_0}'$, we infer 
\begin{equation}
P_{\alpha}+P_{i \alpha}+P_{(1+i)\alpha} \sim 3P_0 
\label{4}
\end{equation} 
on the curve $E$. 
Moreover the left hand side of (\ref{4}) is linearly equivalent to 
$P_{2(1+i)\alpha} + 2P_0$ by Lemma \ref{10}. 
Therefore we have $P_{2(1+i)}\alpha \sim P_0$. 
This implies $2(1+i)\alpha \equiv 0 \pmod{\mathcal L}$. 
To find the possibility of $\alpha$, it is sufficient to solve the equation  
$2(1+i)\alpha \equiv 0 \pmod{\mathcal L}$. 
By a simple calculation we have $\alpha=(m+ni)/4$, where 
\[
(m,n)=(0,\ 0), \ (2,\ 2), \ (2,\ 0), \ (0,\ 2), \ (3,\ 1), \ (1,\ 3),\ (1,\ 1), \ (3,\ 3). 
\]
Thus we get eight subgroups, which might be the images of $\rho$ of Definition \ref{14}. 
\end{proof}

Checking the condition $(\diamondsuit)$ of Lemma \ref{16}, we now prove Theorem \ref{8}. 
As we see from the proof of Lemma \ref{17}, we have $G=\langle \sigma \rangle$, where $\sigma(z)=iz+\alpha$. 
Since $\sigma$ has fixed points, the curve $C/G$ is rational.  
For each point $Q \in C$ there exists $\gamma \in \mathbb C$ satisfying that $Q={P_{\gamma}'}$. 
So it is sufficient to prove that ${P_{\gamma}}'+ {P_{\sigma(\gamma)}}'+{P_{\sigma^2(\gamma)}}'+{P_{\sigma^3(\gamma)}}'
 \sim 4{P_0}'$.  
 Since $2(1+i)\alpha \equiv 0 \pmod{\mathcal L}$ as in the proof of Lemma \ref{17}, this holds true by 
Lemma \ref{10}. 
Since $j(C)=1$, we can assume $y^2=4x^3-x$ and hence $e_1=1/2, \ e_2=-1/2, \ e_3=0$. 
Thus we have $Q_0=(0:0:0:1), \ Q_1=(4:-1:2:0), \ Q_2=(4:-1:-2:0)$ and $Q_3=(4:1:0:0)$.  
Let $\ell_1$ and $\ell_2$ are $Z_4$-lines and $G_{\ell_1}=\langle \tau_1 \rangle$ and $G_{\ell_2}=\langle \tau_2 \rangle$. 
If $\ell_1$ and $\ell_2$ meet, then we have ${\tau_1}^2={\tau_2}^2$ by Lemma \ref{11}. 
Letting $\tau_1(z)=iz+\alpha_1$ and $\tau_2(z)=iz+\alpha_2$, we have $(1+i)(\alpha_1-\alpha_2) \in \mathcal L$. 
Denote by $\ell(m,n)$ the line corresponding to the group $\langle \tau \rangle$ by the arrangement-map $\rho$, where 
$\tau(z)=iz+(m+ni)/4$. 
The following assertion is easy to see. 

\begin{claim}
Putting $\sigma_{mn}(z)=iz+(m+ni)/4$ and $G_{mn}=\langle \sigma_{mn} \rangle$, 
we have $G_{00} \cap G_{22}=\langle \sigma_0 \rangle, \ G_{20} \cap G_{02}=\langle \sigma_3 \rangle, \ 
G_{11} \cap G_{33}=\langle \sigma_2 \rangle$ and  $G_{31} \cap G_{13}=\langle \sigma_1 \rangle$. 
\end{claim}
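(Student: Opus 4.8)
The plan is to reduce the claim to a single group-theoretic observation combined with a direct computation of squares. First I would record that each $\sigma_{mn}$ genuinely has order four: since the linear coefficients satisfy $1+i+i^2+i^3=0$, one finds $\sigma_{mn}^4(z)=z$, while $\sigma_{mn}^2$ is of the form $z\mapsto -z+\beta$ and hence not the identity, so $\sigma_{mn}$ has order exactly four and $G_{mn}=\langle\sigma_{mn}\rangle\cong Z_4$. The key point is that a cyclic group of order four contains a unique subgroup of order two, namely the one generated by the square of a generator. Consequently, for two distinct groups $G_{mn}$ and $G_{m'n'}$ the intersection $G_{mn}\cap G_{m'n'}$ is either trivial or equals this common order-two subgroup, and the latter happens precisely when $\sigma_{mn}^2=\sigma_{m'n'}^2$ as automorphisms of $E=\mathbb C/\mathcal L$.

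Next I would compute the squares explicitly. From $\sigma_{mn}(z)=iz+(m+ni)/4$ one obtains
\[
\sigma_{mn}^2(z)=-z+(1+i)\frac{m+ni}{4}=-z+\frac{(m-n)+(m+n)i}{4},
\]
so $\sigma_{mn}^2$ is an involution $z\mapsto -z+\beta_{mn}$ with $\beta_{mn}=\big((m-n)+(m+n)i\big)/4$. Recalling that here $\omega=i$, so that $\sigma_0(z)=-z$, $\sigma_1(z)=-z+1/2$, $\sigma_2(z)=-z+i/2$, and $\sigma_3(z)=-z+(1+i)/2$, the claim reduces to verifying that the translation parts $\beta_{mn}$ of the paired groups agree modulo $\mathcal L=\mathbb Z+\mathbb Zi$ and match the asserted $\sigma_i$.

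Finally I would carry out the four reductions one pair at a time. For $(0,0)$ and $(2,2)$ we have $\beta_{00}=0$ and $\beta_{22}=i\equiv 0\pmod{\mathcal L}$, so both squares equal $\sigma_0$ and $G_{00}\cap G_{22}=\langle\sigma_0\rangle$; for $(2,0)$ and $(0,2)$ we have $\beta_{20}=(1+i)/2$ and $\beta_{02}=(-1+i)/2\equiv (1+i)/2\pmod{\mathcal L}$, giving $\sigma_3$; for $(1,1)$ and $(3,3)$ we have $\beta_{11}=i/2$ and $\beta_{33}=3i/2\equiv i/2$, giving $\sigma_2$; and for $(3,1)$ and $(1,3)$ we have $\beta_{31}=1/2+i\equiv 1/2$ and $\beta_{13}=-1/2+i\equiv 1/2$, giving $\sigma_1$. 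Distinctness of the paired groups (so that the intersection is the order-two subgroup rather than all of $G_{mn}$) is immediate, since neither generator lies in the other group. The only place where anything could go wrong is the reduction of the translation parts modulo $\mathcal L$; once the congruences $i\equiv 0$, $(-1+i)/2\equiv(1+i)/2$, $3i/2\equiv i/2$, and $\pm 1/2+i\equiv 1/2$ are noted, each of which uses only that $1,i\in\mathcal L$, all four assertions follow at once.
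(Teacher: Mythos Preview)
Your argument is correct and is precisely the natural verification one would carry out; the paper itself offers no proof beyond the remark that the assertion is ``easy to see,'' so your computation of the squares $\sigma_{mn}^2(z)=-z+\big((m-n)+(m+n)i\big)/4$ and the reduction of the translation parts modulo $\mathcal L$ is exactly what is needed. There is nothing to compare: you have simply supplied the omitted details.
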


\begin{claim}
The intersections of the eight $Z_4$-lines are   
 $\ell(0,0) \cap \ell(2,2)=Q_0$, $\ell(2,0) \cap \ell(0,2)=Q_3$, $\ell(1,1) \cap \ell(3,3)=Q_2$ and $\ell(3,1) \cap \ell(1,3)=Q_1$. 
\end{claim}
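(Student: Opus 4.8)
The plan is to prove the assertion by showing directly that each of the eight $Z_4$-lines passes through a definite vertex of the tetrahedron, the vertex being dictated by the involution $\sigma_{mn}^2$ already computed in the preceding Claim, and then reading off the four intersections. The single fact I would isolate and prove is the following observation: \emph{if $\ell$ is any Galois line whose Galois group $G_\ell$ contains $\sigma_k$, then $Q_k\in\ell$.} Granting this, the statement becomes formal, and no appeal to Lemma \ref{11} is needed.

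To set up the observation I would first pin down the projective transformation $\widetilde{\sigma}_k$ extending each involution $\sigma_k$. Using Lemma \ref{24} together with the embedding $\phi(x,y)=(1:x^2:x:y)$, one gets $\widetilde{\sigma}_0=\mathrm{diag}(1,1,1,-1)$, and for $1\le k\le 3$ a direct substitution produces an explicit $4\times4$ matrix. In every case $\widetilde{\sigma}_k$ turns out to be a harmonic homology, i.e. an involution whose fixed locus is a plane (fixed pointwise) together with a single isolated point, and that isolated point is exactly $Q_k$: for $k=0$ it is visibly $(0:0:0:1)$, and for $1\le k\le 3$ one checks that $Q_k=(1:-c_k:e_k:0)$ is the eigenvector carrying the exceptional eigenvalue (a short computation using $e_1+e_2+e_3=0$, so that $a_k-e_k^2=c_k$). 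This identification $\sigma_k\leftrightarrow Q_k$ is the computational heart of the argument, and the main obstacle; once the matrices are written down it is a one-line eigenvector check.

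The observation then follows formally. Since $\sigma_k\in G_\ell$ is a deck transformation of $\pi_\ell|_C$, its extension $\widetilde{\sigma}_k$ descends to the base $\ell_0$ as the identity; as $\pi_\ell|_C$ is surjective, $\widetilde{\sigma}_k$ therefore fixes \emph{every} plane of the pencil through $\ell$. But for a harmonic homology the planes fixed setwise are precisely those through its center $Q_k$, together with the single axis plane. A pencil can meet the axis in at most one member, so infinitely many planes through $\ell$ pass through $Q_k$; were $Q_k\notin\ell$, only the one plane $\langle\ell,Q_k\rangle$ could contain $Q_k$, a contradiction. Hence $Q_k\in\ell$.

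Finally I would combine this with the preceding Claim. For a $Z_4$-line $\ell(m,n)$ we have $G_{\ell(m,n)}=\langle\sigma_{mn}\rangle\ni\sigma_{mn}^2$, and the Claim evaluates $\sigma_{mn}^2$: it equals $\sigma_0$ for $(m,n)=(0,0),(2,2)$; it equals $\sigma_3$ for $(2,0),(0,2)$; it equals $\sigma_2$ for $(1,1),(3,3)$; and it equals $\sigma_1$ for $(3,1),(1,3)$. By the observation the two lines in each of these pairs pass through the same vertex, namely $Q_0,Q_3,Q_2,Q_1$ respectively. The two lines in each pair are distinct because $\rho$ is injective (Lemma \ref{15}), and distinct lines meet in at most one point, so each pair meets exactly at the stated vertex; this is the assertion. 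A slightly less self-contained alternative would replace the observation by Lemma \ref{11}: the nontrivial intersection $G_{\ell_1}\cap G_{\ell_2}=\langle\sigma_k\rangle$, with $\sigma_k$ having a fixed point on $C$, forces the two lines to meet at a singular point of a quadric through $C$, hence at a vertex, which the same computation identifies as $Q_k$.
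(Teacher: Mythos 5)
Your argument is correct, but it reaches the conclusion by a genuinely different route from the paper's. The paper's proof is short and computational: it invokes the group intersections $G_{mn}\cap G_{m'n'}=\langle\sigma_k\rangle$ from the preceding Claim, tacitly applies Lemma \ref{11} to conclude that two Galois lines sharing the involution $\sigma_k$ must meet at the vertex of the quadric cone attached to $\sigma_k$, and then locates that vertex by intersecting the two explicit $V_4$-lines of Lemma \ref{25} whose groups contain $\sigma_k$ (e.g.\ for $\sigma_2$ the lines attached to $G_{12}$ and $G_{23}$, whose intersection is $Q_2$). You instead isolate and prove the cleaner general fact that any Galois line whose group contains $\sigma_k$ passes through $Q_k$, by exhibiting the projective extension $\widetilde{\sigma}_k$ as a harmonic homology with center $Q_k$ (your eigenvector computation is right: with $e_1+e_2+e_3=0$ one gets $a_k-e_k^2=c_k$, the trace of the induced matrix is $2a_k\neq 0$, so the eigenvalues split $(a_k,a_k,a_k,-a_k)$ and $(1:-c_k:e_k:0)$ carries the exceptional one) and then using the dual description of the planes invariant under a homology together with the fact that every plane of the pencil through $\ell$ is $\sigma_k$-invariant. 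What your approach buys is independence from Lemma \ref{25} and from the $V_4$-line arrangement, plus a conceptual explanation of \emph{why} the intersection vertex is dictated by the common involution; what it costs is writing out the $4\times4$ matrices, a computation the paper avoids by reusing Lemma \ref{25}. Both proofs ultimately rest on the same two inputs — the evaluation of $\sigma_{mn}^2$ and an identification of $\sigma_k$ with the vertex $Q_k$ — and your appeal to Lemma \ref{15} to see that the two lines in each pair are distinct (so that they meet in exactly one point) is a detail the paper leaves implicit.
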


\begin{proof}
The intersection points are found by Lemma \ref{25}.  For example, 
the point $\ell(1,1) \cap \ell(3,3)$ is found as follows: 
Since $G_{11} \cap G_{33}=\langle \sigma_2 \rangle$, the point is the intersection of 
two lines 
\[
c_3X-Y+2e_1Z=W=0 \ \mathrm{and} \ c_1X-Y+2e_1=W=0, 
\] 
where $e_1=1/2, \ e_3=0$ and $c_1=1/4, \ c_3=-1/4$. 
So it is $Q_2$.  

\end{proof}

\medskip

Now, we prove Corollary \ref{18}.
Let $E$ be the Weierstrass normal form of the normalization of $\Gamma$ and let $\mu : E \longrightarrow \Gamma \subset \mathbb P^3$ 
be the normalization morphism. 
Put $D=\mu^*(L)$ for a line $L$ in $\mathbb P^2$. 
By Lemma \ref{19} we can assume $C$ is given by the embedding by $|4P_0|$. 
Therefore, $\Gamma$ is regained as $\pi_P(C)$, where $\pi_P : \mathbb P^3 \dasharrow \mathbb P^2$ is the projection with center $P$. 
Suppose $\Gamma$ has two Galois points $Q_1$ and $Q_2$. 
Then, letting $\ell_1={\pi_P}^*(Q_1)$ and $\ell_2={\pi_P}^*(Q_2)$, they are Galois lines for $C$ and $\ell_1 \cap \ell_2=\{ P \}$. 
However, as we have seen Remark \ref{20}, the projection $\pi_P$ induces a $2$ to $1$ morphism from $C$ to 
$\Gamma$ and $\pi_P(C)$ is a rational curve, this is a contradiction. 
On the other hand, if $P$ lies in one of the Galois lines, i.e., $P \in \ell$ and is not the vertex, then 
$\pi_P$ induces a birational transformation on $C$ by Remark \ref{23} and $\pi_P(\ell \setminus \{ P \})$ is a Galois point for 
$\Gamma = \pi_P(C)$. 

Finally, we mention Remark \ref{22}. 
Take a point $Q \in \mathbb P^3$ which does not lie on the Galois lines. 
Then, the curve $\Gamma = \pi_Q(C)$  is a quartic curve with no Galois point. 
Because, by Remark \ref{23} it is birational to $C$. 
Suppose it has a Galois point. Then, there exists a smooth quartic curve $C'$ in $\mathbb P^3$ and 
a Galois line ${\ell}'$ and a point $P' \in \mathbb P^3$ satisfying that $\pi_{P'}(C')=\Gamma$. 
Moreover, there exists an isomorphism $\varphi : \mathbb P^3 \longrightarrow \mathbb P^3$ such that 
$\varphi(C')=C$ and $\varphi({\ell}')$ coincides with some Galois line for $C$. Since ${\ell}' \ni P'$, 
we have $\varphi({\ell}') \ni P$, which is a contradiction.

Thus we complete all proofs.

\bigskip

\noindent {\bf Problem.} 
We  ask the following questions concerning Galois embedding of elliptic curves.
\begin{enumerate}
\item[(a)] In case $\ell$ is not a Galois line, consider the Galois group $G$ of the 
Galois closure curve \cite[Definition 1.3]{y2}. If $\ell$ is general, then the Galois group 
is a full symmetric group \cite[Theorem 2.2]{y2}, see also \cite{ps}. So we ask if $\ell$ is neither general (i.e., $G \not\cong S_4$) nor Galois, then 
what group can appear.  
For the group which appears, how are the arrangements of the lines with the group?  
\item[(b)]Let $D$ be a divisor of degree $d \geq 5$ on $E$. Then, study the Galois embedding by 
$|D|$. In particular, consider the Galois group and the arrangement of Galois subspaces (\cite{y3}).
\end{enumerate}

\bigskip

\bibliographystyle{amsplain}

\end{document}